\providecommand{\tbl}[2]{%
  \begin{table}[ht]
    \centering
    \caption{#1}
    #2
  \end{table}%
}
\newcommand{\ind}{\perp\!\!\!\!\perp}
\theoremstyle{plain}
\newtheorem{theorem}{Theorem}
\newtheorem{corollary}[theorem]{Corollary}
\newtheorem{proposition}[theorem]{Proposition}
\theoremstyle{definition}
\newtheorem{definition}[theorem]{Definition}
\newtheorem{example}[theorem]{Example}
\title{Data Gluttony: Epistemic Risks, Dependent Testing and Data Reuse in Large Datasets}
\author{Reid Dale$^1$, Jordan Rodu$^2$, Maria E. Currie$^3$, Mike Baiocchi$^4$}
\date{
	$^1$Stanford University School of Medicine Department of Cardiothoracic Surgery \\ \texttt{reiddale@stanford.edu}\\%
    $^2$University of Virginia Department of Statistics\\
    \texttt{jsr6q@virginia.edu}\\
    $^3$Stanford University School of Medicine Department of Cardiothoracic Surgery \\
    \texttt{mecurrie@stanford.edu}\\
     $^4$Stanford University Department of Epidemiology $\&$ Population Health \\ \texttt{baiocchi@stanford.edu}\\
}
\begin{document}
\maketitle

\begin{abstract}
Large-scale registries have collected vast amounts of data which has enabled investigators to efficiently conduct studies of observational data. However, we demonstrate how data reuse leads to positive dependence among the inferential tasks and cascading inferential errors. 

    Common practice is for investigators to use all data meeting the inclusion criteria of their study to perform their analysis. We term this common practice data gluttony. It has apparent formal justification insofar as this approach maximizes per-study power. But this comes at a cost: data reuse affects the \textit{shape} of the distribution of inferential errors. Using the theory of risk orderings we demonstrate how positively dependent testing procedures result in strictly riskier distributions of inferential error. 
    
    We identify two remedies to this state of affairs: research portfolio optimization and what we term data temperance. Research portfolio optimization requires that we formulate the enterprise of inference in a utility theoretic framework: associated to each hypothesis to be evaluated is some utility dependent on its truth as well as the impact of the statistical decision rendered on the basis of the data. Under certain models of data governance, this approach can be used to optimally allocate data usage across multiple inferential tasks. 

    On the other hand, data temperance is a more flexible strategy for managing the distribution of inferential errors. Data temperance is the principle that an investigator use only as much data as is necessary to perform the task at hand. This is possible due to the diminishing marginal returns in power and precision in sample size. In contrast to portfolio optimization, data temperance can be effectively applied in a federated manner. Moreover, we analyze the effectiveness of data temperance at reducing the dependence across testing and develop a theory of the capacity of a static database to sustain large numbers of inferential tasks with low probability of inducing pairwise dependent testing procedures.

    Registries are an invaluable resource. We must ensure that they \textit{remain} valuable by mitigating the risk of cascading inferential errors.
    
\noindent\textbf{Keywords:} Data reuse; Subsampling; Dependent testing; Multiple comparisons; Portfolio optimization; Risk orderings; Sequential testing; Stop-loss theory.
\end{abstract}

\newpage 

	\tableofcontents

\newpage 
\section{Introduction}

The use, and particularly the \textit{reuse}, of large datasets for inferential tasks in medical, economic, and machine learning contexts has enabled investigators to conduct many studies. The benefits of such datasets are promising: (i) increased sample size allows for improved power, precision, and training data; (ii) the bottleneck of data collection is obviated for amenable tasks; (iii) such datasets tend to be collected from federated reporting sites, representing a broader swath of the intended superpopulation; (iv) facilitates the analysis of rare subgroups that would be otherwise difficult to study; (v) large observational data is required to probe hard-to-randomize treatments such as smoking health outcomes, impact of state-level health care changes, or donor-recipient matching in transplantation.

But are these databases and registries the panacea that was promised? The central contention of this paper is that the \textit{reuse} of data and common practices in hypothesis development pose the systemic risk of statistical analyses that exhibit nontrivial dependence structure, leading to long-tailed distributions of error amenable to portfolio-theoretic analysis.

These benefits of these datasets are tremendous, but the reuse of these datasets poses novel risks that require consideration. First, unrestricted data access allows for untraceable $p$-hacking: it is trivial to write a script to perform statistical analyses on a dataset until a decision threshold is obtained for some hypothesis which may avail post hoc justification in a publication. Recent work has identified a precipitous rise in the number of formulaic research articles generated from multiple publicly available datasets, raising concerns about the rigor and reproducibility of research generated in this manner (\cite{suchak_explosion_2025}, \cite{spick_quantifying_2025}).

In this paper, we articulate a distinct concern of underlying multiplicity concomitant to the rapid increase in the number of research projects reusing datasets. The ease of use of such datasets enables the following research cycle: Researcher $A$ may evaluate hypothesis $H_0$ using data $D$ and publishes their result. Researcher $B$ sees this and constructs a related hypothesis $H'_0$, which they can conveniently evaluate on the same dataset $D$. Thus, the generation of $H'_0$ is dependent on a decision procedure made using the same database $D$, resulting in an indirect form of data peeking. This research cycle is distinct from the traditional framework which gave rise to existing error protection procedures (e.g., p-values, multiple hypothesis testing corrections). Traditionally, novel data sets were generated for each study (e.g., a randomized controlled trial, a survey using a sampling procedure) and then used to evaluate a specified number of hypotheses. In the traditional framework, subsequent studies would then rely on novel data sets.

The status quo for utilizing these datasets can be articulated as follows: to evaluate null hypothesis $H_0$ using data from $D$, analyze all elements of $D$ meeting the inclusion criteria for the study. We term this policy \textit{data gluttony} as it seeks to maximize the data used in the evaluation of $H_0$. By contrast, we articulate a position termed \textit{data temperance}, which can be described as using only a fraction of the data that is needed to achieve the objectives of the study.

There is a long and rich tradition of analyzing multiplicity in statistical procedures \textit{within} a particular study (e.g. multiple corrections) or in the context of adaptive evaluations of a hypothesis (e.g. sequential testing). These approaches are often couched in terms of managing expected rates of error events (e.g. FWER, FDR, etc.) such as  \cite{benjamin_redefine_2018}, \cite{de_ruiter_redefine_2019},  \cite{thompson_dataset_2020}, \cite{foster_investing_2008}, and \cite{johari_always_2019}). By contrast, we approach this phenomenon from the perspective of managing a \textit{portfolio} of analyses --- i.e., \textit{across} studies --- leveraging utility-theoretic machinery  (as in \cite{lakens_justify_2018}) to identify techniques to mitigate the epistemic risks engendered by the reuse of data.


\section{Data Gluttony}

\subsection{Data Gluttony Defined and Defended}

We begin our investigation by articulating the default policy of maximal data usage---data gluttony---and a formal justification for it.

When conducting a study using observational or retrospective data, investigators aim to make the most of the data available to them. Typically, this manifests in using as much data as is available meeting the inclusion/exclusion criteria for the study. This policy has formal justification in the context of the evaluation of a family of hypotheses.

Suppose that we want to simultaneously achieve:
\begin{enumerate}
\item Type I error control at level $\alpha$ per study, and
\item Maximum average and per-study power.
\end{enumerate}

We assume that for each $H_{0i}$ there is a family of tests depending on a vector of integers encoding sample size $\pi(\theta, \overline{n})$ such that if $\overline{n} \leq \overline{m}$ in the pointwise partial order then $\pi(\theta, \overline{n}) \leq \pi(\theta, \overline{m})$.

This holds, for example, for the power function for comparing the mean of two treatments using a $t$-test. When this formal condition holds, the argument for Theorem~\ref{thm:glutton-defense} holds.

\begin{theorem}\label{thm:glutton-defense}
Let $H_{01},\ldots, H_{0k}$ be a set of null hypotheses on a parameter space $\Theta$ and let $D$ be a shared dataset. Let $T_1(D_1),\ldots, T_k(D_k)$ be a family of tests of level $\alpha$ evaluating $H_{0i}$ on some data $D_i \subseteq D$ based on its inclusion criteria.

For each $H_{0i}$ let $D_{i}^{\max}$ be the maximal subset of $D_i$ meeting the inclusion criteria for the study. Then the expected rate of Type II errors is minimized when $D_i = D_i^{\max}$.
\end{theorem}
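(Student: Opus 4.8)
The plan is to reduce the claim to the stated monotonicity of the power functions by first showing that the objective decouples across studies. I would fix notation: let $\mathrm{II}_i(\theta; D_i)$ denote the probability that $T_i$ commits a Type~II error when the truth is an alternative $\theta$ for $H_{0i}$ and the test is run on $D_i$. Whatever precise meaning is attached to ``expected rate of Type~II errors'' --- an average $\tfrac1k\sum_i \mathrm{II}_i$ over the studies, an expectation $\sum_i \mathbb{E}_{\mu_i}[\mathrm{II}_i(\theta;D_i)]$ against a prior $\mu_i$ supported on the alternatives, or a weighted version thereof --- it is a nonnegatively-weighted aggregate $R(D_1,\dots,D_k) = \sum_i w_i\,\mathbb{E}_{\mu_i}[\mathrm{II}_i(\theta;D_i)]$ with $w_i \ge 0$. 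The crucial structural point, which I would state explicitly, is that although $D$ is shared and the $D_i$ may overlap, $\mathrm{II}_i$ is a function of $D_i$ alone; hence $R$ is separable and it suffices to minimize each summand independently over $D_i$.

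Next I would connect subsetting the data to the sample-size partial order on which the hypothesis is phrased. Fix $i$. By assumption $T_i$ is the level-$\alpha$ member of a family with power function $\pi_i(\theta,\overline n)$ indexed by the integer vector $\overline n$ of (per-stratum, per-arm) sample sizes, so $\mathrm{II}_i(\theta;D_i) = 1 - \pi_i(\theta,\overline n(D_i))$ for every alternative $\theta$, where $\overline n(D_i)$ is the count vector induced by $D_i$. Since any admissible $D_i$ is obtained from $D_i^{\max}$ only by deleting records, $\overline n(D_i) \le \overline n(D_i^{\max})$ in the pointwise partial order; this is the single geometric fact that makes the monotonicity hypothesis applicable. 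Invoking that hypothesis gives $\pi_i(\theta,\overline n(D_i)) \le \pi_i(\theta,\overline n(D_i^{\max}))$, so $\mathrm{II}_i(\theta;D_i) \ge \mathrm{II}_i(\theta;D_i^{\max})$ pointwise in $\theta$, and hence $\mathbb{E}_{\mu_i}[\mathrm{II}_i(\theta;D_i)] \ge \mathbb{E}_{\mu_i}[\mathrm{II}_i(\theta;D_i^{\max})]$ by monotonicity of expectation.

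To finish I would recombine: summing the last inequality against the weights $w_i \ge 0$ yields $R(D_1,\dots,D_k) \ge R(D_1^{\max},\dots,D_k^{\max})$, with the lower bound attained, so the expected Type~II error rate is minimized exactly at $D_i = D_i^{\max}$ for every $i$ --- the greedy, coordinatewise choice. I would also remark that requirement~(1), Type~I control at level $\alpha$, is untouched: every member of each test family is assumed to have size at most $\alpha$, so enlarging $D_i$ never jeopardizes validity, and there is genuinely no level-versus-power tradeoff to negotiate; the two desiderata are satisfied simultaneously by data gluttony.

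The main obstacle is not a hard estimate but getting the second step's bookkeeping exactly right: one must pin down that ``using a subset of the admissible data'' means moving \emph{down} in the partial order carrying the monotonicity assumption (rather than, say, relaxing the inclusion criteria), and one must be explicit that --- despite data reuse --- each study's Type~II error probability depends only on that study's data, which is precisely what licenses the term-by-term minimization and makes data gluttony optimal \emph{for this particular objective}. The remainder of the paper is, of course, devoted to arguing that this objective is the wrong one.
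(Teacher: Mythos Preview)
Your proposal is correct and follows essentially the same approach as the paper's proof: exploit the separability of the expected Type~II error rate across studies and then apply the assumed monotonicity of each power function in sample size to minimize term by term. Your write-up is in fact more careful than the paper's---making explicit the translation from $D_i \subseteq D_i^{\max}$ to $\overline n(D_i) \le \overline n(D_i^{\max})$, accommodating weighted/prior-averaged objectives, and noting that level-$\alpha$ control is preserved---but the underlying argument is the same.
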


\begin{proof}
Since the power of each test is maximized by using the largest subset of sampled data meeting the inclusion criteria for the test, we can minimize the rate of Type II errors among the family of tests by myopically maximizing the data usage of each test.

More formally, for each $\theta \in \Theta$ and $H_{0i}$, the power of the test $T_i(X)$, $\pi_i(\theta;X)$, satisfies
\begin{equation}
X\subseteq Y \Rightarrow \pi_i(\theta;X) \leq \pi_i(\theta;Y)
\end{equation}
for all $\theta$ and $X,Y$ drawn from the superpopulation. Consequently, uniformly in $\theta$, picking the largest subset of $D$ meeting the inclusion criteria for $H_{0i}$ maximizes $\pi_i(\theta,X)$.

Now, for each $\theta \in \Theta$ and $D_i\subseteq D$, we have that 
\begin{equation}
\mathbb{E}\left[\frac{\sum (1-\pi_i(\theta,D_i))}{n}\right] = 1- \frac{\sum \pi_i(\theta,D_i)}{n}.
\end{equation}
Since $\pi_i(\theta,D_i)$ does not depend on any other test, a sufficient criterion for minimizing the error rate is maximizing $\pi_i(\theta,D_i)$.
\end{proof}

The arguments offered in favour of data gluttony rest on a shared flavour of assumption: that our disposition toward error be governed by a specific utility function. For the hypothesis testing glutton, the utility function of a study $S$ is 
\begin{equation}
u(S) = -(\#E_{II}(S) +\#E_{I}(S))
\end{equation}
where $\#E_I$ and $\# E_{II}$ are the counts of type I and type II errors respectively. Then the constrained optimization of maximizing $\mathbb{E}[u]$ subject to the constraint that each test is performed at level $\alpha$ (independently of the sample size) is achieved by the data glutton.

Perhaps data gluttony is the appropriate attitude to take. The following section is intended to provide reasons to question data gluttony. As we will see, the dependence structure of a family of testing procedures affects the expected utility of a portfolio of studies for a wide class of utility functions.

\subsection{Portfolio-Theoretic Perspectives on Data Gluttony}

In this section we focus our attention on the per-study linear utility function
\begin{equation}
u(S) = -(\#E_{II}(S) +\#E_{I}(S))
\end{equation}
and examine its relation to dependence among testing procedures in a portfolio of studies. For a given set $\mathcal{P}$ of studies $S$, the linear utility of the portfolio $\mathcal{P}$ is defined to be
\begin{equation}
u(\mathcal{P}) = \sum_{S\in \mathcal{P}} u(S).
\end{equation}

First, let $\Theta$ be a parameter space and $\{H_{0i}\}_{i\leq n}\subseteq \Theta$ be a set of distinct null hypotheses, and let $H_{1i} = \Theta\setminus H_{0i}$ be the alternative hypotheses. We assume that each study $S_i$ is evaluating a single null hypothesis $H_{0i}$, using some test and data.

Observe that for a given study $S$, the error events $E_{I}(S)$ and $E_{II}(S)$ are disjointly supported: for $E_{I}(S)$ to occur, $H_0(S)$ must be true and for $E_{II}(S)$ to occur $H_1(S)$ must be true. Consequently, $u(S)$ is binomially distributed with probability $\mathrm{pr}(u(S) = 1) = p(\theta)$ depending on the true value of $\theta$.

This decomposition allows us to explore two useful special cases before turning to the general case: (i) the distribution of errors under the global null, (ii) the distribution of errors under the global alternative, and (iii) an expected utility theoretic approach to data usage.

\subsection{Case Study I: Type I Errors under the Global Null}\label{sec:typeI_error}

Let $\{H_{0i}\}_{i\leq n}\subseteq \Theta$ be a set of null hypotheses, and assume that for each study $S_i$ evaluating $H_{0i}$ the test has level $\alpha$ for all sufficiently large $n$. Assume further that each $H_{0i}: \theta_i = \theta_{0i}$ is a two-sided hypothesis, so that $\mathrm{pr}(E_{I}) = \alpha$.

For the remainder of the section, we assume the global null $\bigcap H_{0i}$ is true and investigate the relationship between the dependence of testing procedures and properties of the linear utility function.

Assuming the global null, for each study the utility function $u$ reduces to
\begin{equation}
u(S) = -\#E_{I}(S).
\end{equation}

For each study $S$, $u(S)$ is distributed by
\begin{equation}
u(S) =
\begin{cases}
0 & \text{with probability } 1-\alpha \\
-1 & \text{with probability } \alpha
\end{cases}
\end{equation}
and consequently the portfolio-level utility is given by 
\begin{equation}
u(\mathcal{P}) = \sum_{S\in \mathcal{P}} u(S)
\end{equation}
is the negative sum of Bernoulli random variables.

By linearity of expectation, 
\begin{equation}
\mathbb{E}[u(\mathcal{P})] = \sum_{S\in \mathcal{P}} \mathbb{E}[u(S)] = -\alpha|\mathcal{P}|
\end{equation}
and is invariant under the dependence between the studies comprising $\mathcal{P}$.

By being invariant under the dependence of the underlying error events, the utility function $u$ adopts a specific risk attitude toward the incidence of Type I errors: it is risk-neutral in the expected number of Type I errors.

We argue that, under the global null, the indifference to portfolios of equal expected number of Type I errors can lead to riskier-than-necessary dependence in the findings.

\begin{enumerate}
\item The utility function $u$ is indifferent to any portfolios $\mathcal{P}$ with identical expected Type I error rates.
\item We can construct portfolios $\mathcal{P}_1$ and $\mathcal{P}_2$ evaluating the same hypotheses using data drawn from $D$ such that (a) $\mathcal{P}_1$ and $\mathcal{P}_2$ have equal expected counts of Type I errors, (b) the studies comprising $\mathcal{P}_1$ have independent errors while $\mathcal{P}_2$ has positively dependent errors, and (c) $\mathcal{P}_1$ would be preferred by all risk-averse agents to $\mathcal{P}_2$.
\item Consequently, the preference relation between portfolios exhibited by $u$ fails to be risk-averse, and is inefficient insofar as both risk-neutral and risk-averse agents would be satisfied by selecting $\mathcal{P}_1$ over $\mathcal{P}_2$.
\end{enumerate}

The argument hinges crucially on the data usage strategy. While using disjoint sets of data drawn from the same distribution guarantees the independence of the test procedures, the reuse of data can result in positively dependent testing procedures that make the portfolio riskier in a precise mathematical sense given by stop-loss theory (Section \ref{subsec:stoploss}).

This argument alone does not settle the debate about whether the linear utility function is satisfactory. For one, we are assuming the global null hypothesis with certainty. If the null hypothesis were known with certainty, there would be no point in performing the test! Moreover, by concentrating on the global null, we have precluded the possibility of Type II errors from occurring. While (approximately) level $\alpha$ tests can be conducted at all sufficiently large sample sizes, for a fixed level $\alpha$ there is a tradeoff between sample size and power that must be accounted for. This tradeoff is discussed and explored in Section \ref{sec:power_tradeoff}.

\subsection{Dependence of Type I Errors, Stop Loss Premiums, and Risk Aversion}\label{subsec:stoploss}

We now explore the consequences of this choice in the context of two studies $S_1$ and $S_2$ comprising the portfolio $\mathcal{P}$. Let $E_1 = \#E_I(S_1)$ and $E_2 = \#E_I(S_2)$.

Given $E_1$ and $E_2$, the probability distribution for the count of errors is expressible as a function of $\mathrm{pr}(E_1)$, $\mathrm{pr}(E_2)$, and the conditional probability $\mathrm{pr}(E_2|E_1)$ (we refer the interested reader to Theorem \ref{thm:dependent_error_probs} for a proof).

\tbl{Distribution of Errors for Two Tests (Conditional Probability Rewrite)}{
\begin{tabular}{cc}
\textbf{Count} & \textbf{Probability} \\
0 & $1-\mathrm{pr}(E_2) - \mathrm{pr}(E_1)(1-\mathrm{pr}(E_2|E_1))$ \\
1 & $\mathrm{pr}(E_2) + \mathrm{pr}(E_1)\times(1-2\mathrm{pr}(E_2|E_1))$\\
2 & $\mathrm{pr}(E_1)\times \mathrm{pr}(E_2|E_1)$ \\
\end{tabular}}
\label{tab:dependent_error_distribution_condprob}

We can illustrate the effect of nontrivial dependence between $E_1$ and $E_2$ on the distribution of errors in Figure \ref{fig:distribution_of_errors}. We assume that $\mathrm{pr}(E_1) = \mathrm{pr}(E_2) = \alpha$ but that $\mathrm{pr}(E_2|E_1)$ ranges from $0$ to $1$. Observe that while the expected number of errors is constant, the higher the conditional probability $\mathrm{pr}(E_2|E_1)$ the more likely there will be $2$ errors in the portfolio.

\begin{figure}[htbp]
    \centering
    \includegraphics[width=0.8\textwidth]{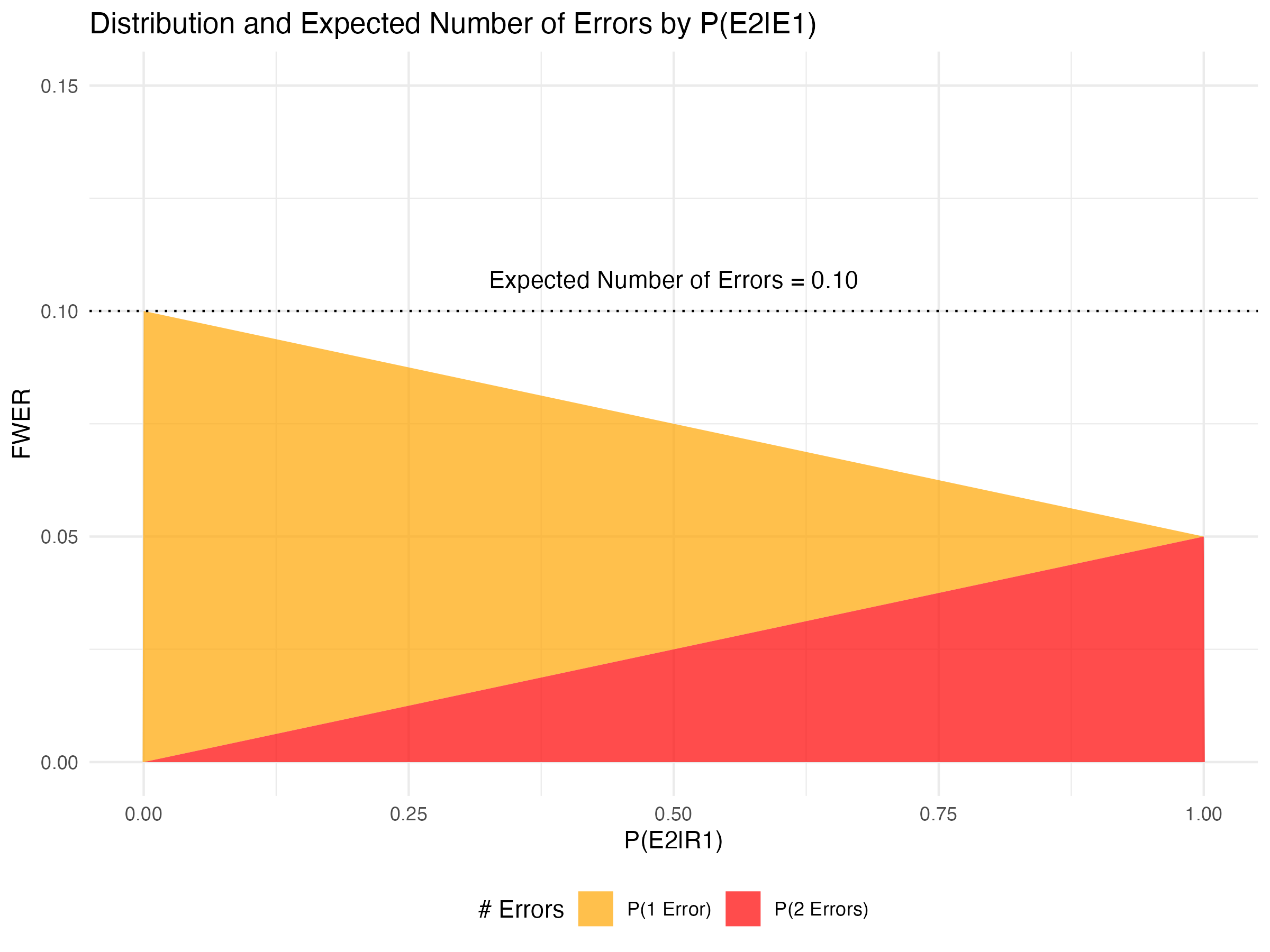}
    \caption{Distribution of Errors Under Different Dependence Structures}
    \label{fig:distribution_of_errors}
    \end{figure}

The obvious question becomes: is there a way to evaluate the impact of this dependence on the preferences of portfolios among risk-averse agents? While we will discuss the management of portfolios under explicit utility functions via expected utility theory, stop-loss theory provides sufficient conditions for risk-averse agents to unanimously prefer one portfolio over another.

The basic intuition of stop-loss theory stems from the theory of reinsurance: if portfolios $\mathcal{P}_1$ and $\mathcal{P}_2$ have equal means, then if for each possible loss $L$ cost to reinsure $\mathcal{P}_1$ to cover a loss of $L$ is at most as expensive as reinsuring $\mathcal{P}_2$ then the risk-averse agent should (weakly) prefer $\mathcal{P}_1$ to $\mathcal{P}_2$.

In this section we discuss how the theory of risk orderings relates to preferences regarding the accumulation of inferential errors. In the following, we assume that the agent's utility depends on only the count of errors; the more general decision-theoretic framework under a fixed utility function $u$ will be discussed in a subsequent section.

Following \cite{kaas_modern_2008}, associated to a random variable $X$ we can associate to it the stop-loss premium function $\rho_X$ defined by the formula
\begin{equation}
\rho_X(L) = \mathbb{E}[(X-L)_{+}]
\end{equation}

When $X$ is a discrete random variable, this reduces to
\begin{equation}
\rho_X(L) = \sum_{x>L}(x-L)\mathrm{pr}(X = x).
\end{equation}

In actuarial terms, the function $\rho_X(L)$ represents the premium needed to cover a loss incurred of size $> L$. The stop-loss premium function also turns out to have an important decision-theoretic interpretation: comparisons of the loss premium function between two random variables determines whether every risk-averse agent prefers $X$ to $Y$.

\begin{theorem}[Kaas et al. 2008, Theorem 7.3.10]
Let $X$ and $Y$ be random variables. Then 
\begin{equation}
\forall L \; \rho_X(L) \leq \rho_Y(L) 
\end{equation}
if and only if for every concave increasing utility function $u$
\begin{equation}
\mathbb{E}[u(-X)]\geq \mathbb{E}[u(-Y)].
\end{equation}
\end{theorem}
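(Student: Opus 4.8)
The plan is to recognize this as the classical equivalence between the \emph{stop-loss order} and the \emph{increasing convex order}, transported across the sign change $u \mapsto -u(-\cdot)$. First I would substitute $\tilde u(x) := -u(-x)$ and observe that $u$ ranges over all concave increasing functions exactly as $\tilde u$ ranges over all convex increasing functions, and that $\mathbb{E}[u(-X)] \ge \mathbb{E}[u(-Y)]$ is equivalent to $\mathbb{E}[\tilde u(X)] \le \mathbb{E}[\tilde u(Y)]$. So it suffices to prove that $\rho_X(L) \le \rho_Y(L)$ for all $L$ if and only if $\mathbb{E}[\tilde u(X)] \le \mathbb{E}[\tilde u(Y)]$ for every convex increasing $\tilde u$; throughout I assume $X$ and $Y$ have finite means, which is automatic in our application since the error count is bounded.

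The ``if'' direction is immediate: for fixed $L$ the function $x \mapsto (x-L)_+$ is convex and increasing, so the hypothesis applied to it gives $\rho_X(L) = \mathbb{E}[(X-L)_+] \le \mathbb{E}[(Y-L)_+] = \rho_Y(L)$. Equivalently, in the original formulation one applies the hypothesis to the concave increasing utility $u(y) = \min(y+L,0)$.

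For the ``only if'' direction I would proceed in three steps. Step 1, extract a mean inequality: since $(X-L)_+ = (X-L) + (L-X)_+$ we have $\rho_X(L) = \mathbb{E}[X] - L + \mathbb{E}[(L-X)_+]$, and $\mathbb{E}[(L-X)_+]\to 0$ as $L\to-\infty$, so $\rho_X\le\rho_Y$ forces $\mathbb{E}[X]\le\mathbb{E}[Y]$. Step 2, treat piecewise-linear convex increasing $\tilde u$: any such function has a representation $\tilde u(x) = c + \beta x + \sum_{i=1}^m \gamma_i (x-t_i)_+$ with $\beta\ge 0$ and $\gamma_i\ge 0$ (the $\gamma_i$ being the successive jumps in slope, all nonnegative by convexity, and $\beta$ the leftmost slope, nonnegative by monotonicity), so by linearity of expectation, Step 1, and the hypothesis applied at each $L=t_i$ we get $\mathbb{E}[\tilde u(X)] = c + \beta\mathbb{E}[X] + \sum_i\gamma_i\rho_X(t_i) \le c + \beta\mathbb{E}[Y] + \sum_i\gamma_i\rho_Y(t_i) = \mathbb{E}[\tilde u(Y)]$. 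Step 3, pass to a general convex increasing $\tilde u$ by writing it as the pointwise supremum of an increasing sequence $\tilde u_n\uparrow\tilde u$ of piecewise-linear convex increasing functions (e.g. running maxima of supporting lines chosen at a countable dense set of abscissae; each supporting line has slope $\ge 0$ since $\tilde u$ is non-decreasing), anchored below by the affine minorant $\tilde u_1$, and applying the monotone convergence theorem to each side.

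The main obstacle is the limiting bookkeeping in Step 3: one must verify that all the expectations are well defined and that monotone convergence genuinely applies, which is exactly why the decomposition in Step 2 is anchored by an affine function of finite expectation. In our setting this is painless --- the error count is bounded, so one may restrict to bounded (indeed finitely piecewise-linear) $\tilde u$ and the argument collapses to Steps 1--2 --- and for the general statement it is the standard finite-mean hypothesis of \cite{kaas_modern_2008}.
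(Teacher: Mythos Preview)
The paper does not actually give its own proof of this theorem: it is quoted as Theorem~7.3.10 of \cite{kaas_modern_2008} and used as a black box, so there is nothing in the paper to compare your argument against. Your proposal is a correct and essentially standard proof of the equivalence between stop-loss order and increasing concave (equivalently, after your sign flip, increasing convex) order: the reduction via $\tilde u(x)=-u(-x)$ is clean, the ``if'' direction by specializing to the hinge functions $(x-L)_+$ is immediate, and your three-step ``only if'' argument (mean comparison from the $L\to-\infty$ asymptotics of $\rho$, the piecewise-linear case via the decomposition $c+\beta x+\sum\gamma_i(x-t_i)_+$, and the monotone-limit passage anchored at an affine minorant) is exactly the route taken in actuarial/risk texts such as \cite{kaas_modern_2008}. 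Your caveat about finite means and the remark that in this paper's bounded-error-count application Steps~1--2 already suffice are both well placed.
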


Consequently, the stop-loss order between two random variables is defined to be:

\begin{definition}
We say that $X$ is stop-loss smaller than $Y$, written $X\leq_{SL} Y$, just in case
\begin{equation}
\forall L\; \rho_X(L) \leq \rho_Y(L).
\end{equation}
We say that a portfolio $\mathcal{P}$ is stop-loss smaller than portfolio $\mathcal{Q}$ just in case the error distributions satisfy $\mathcal{E}_\mathcal{P} \leq_{SL}\mathcal{E}_\mathcal{Q}$.
\end{definition}

Concretely, if a portfolio $\mathcal{P}$ is stop-loss smaller than portfolio $\mathcal{Q}$, every risk-averse agent prefers $\mathcal{P}$ to $\mathcal{Q}$.

Turning now to the example of two (possibly dependent) error events $E_1$ and $E_2$, we investigate how varying $\mathrm{pr}(E_2|E_1)$ affects the stop-loss order. We first calculate the stop-loss premium functions associated with the portfolio as a function of $\mathrm{pr}(E_1)$, $\mathrm{pr}(E_2)$, and $\mathrm{pr}(E_2|E_1)$ (we refer the interested reader to Theorem \ref{thm:dependent_stop_loss} for a proof).

\tbl{Stop Loss Premiums for the Two-Asset Error Portfolio}{
\begin{tabular}{cc}
\textbf{Count} & $\rho_{E_\mathcal{P}}(L)$ \\
0 & $\mathrm{pr}(E_1) + \mathrm{pr}(E_2)$\\
1 & $\mathrm{pr}(E_1)\times \mathrm{pr}(E_2|E_1)$\\
2 & $0$\\
\end{tabular}}
\label{tab:dependent_error_stoploss}

\begin{example}
Let $E_1$ and $E_2$ be error events with probability
\begin{equation}
\mathrm{pr}(E_1) = \mathrm{pr}(E_2) = \alpha.
\end{equation}
Thus, we can identify each $x\in [0,1]$ with a portfolio $\mathcal{P}_{x}$ such that the dependence between $E_1$ and $E_2$ in $\mathcal{P}_{x}$ is $\mathrm{pr}(E_2|E_1) = x$.

Then the stop-loss premium function reduces to Table \ref{tab:dependent_error_stoploss_alpha}.

\tbl{Stop Loss Premiums for the Two-Asset Error Portfolio}{
\begin{tabular}{cc}
\textbf{Count} & $\rho_{E_\mathcal{P}}$ \\
0 & $2\alpha$\\
1 & $\alpha\mathrm{pr}(E_2|E_1)$\\
2 & 0 \\
\end{tabular}}
\label{tab:dependent_error_stoploss_alpha}

The stop-loss premium functions are plotted for $\mathrm{pr}(E_2|E_1)$ ranging from $0$ to $1$ in Figure \ref{fig:stoploss_premiums}.

\begin{figure}[htbp]
    \centering
    \includegraphics[width=0.8\textwidth]{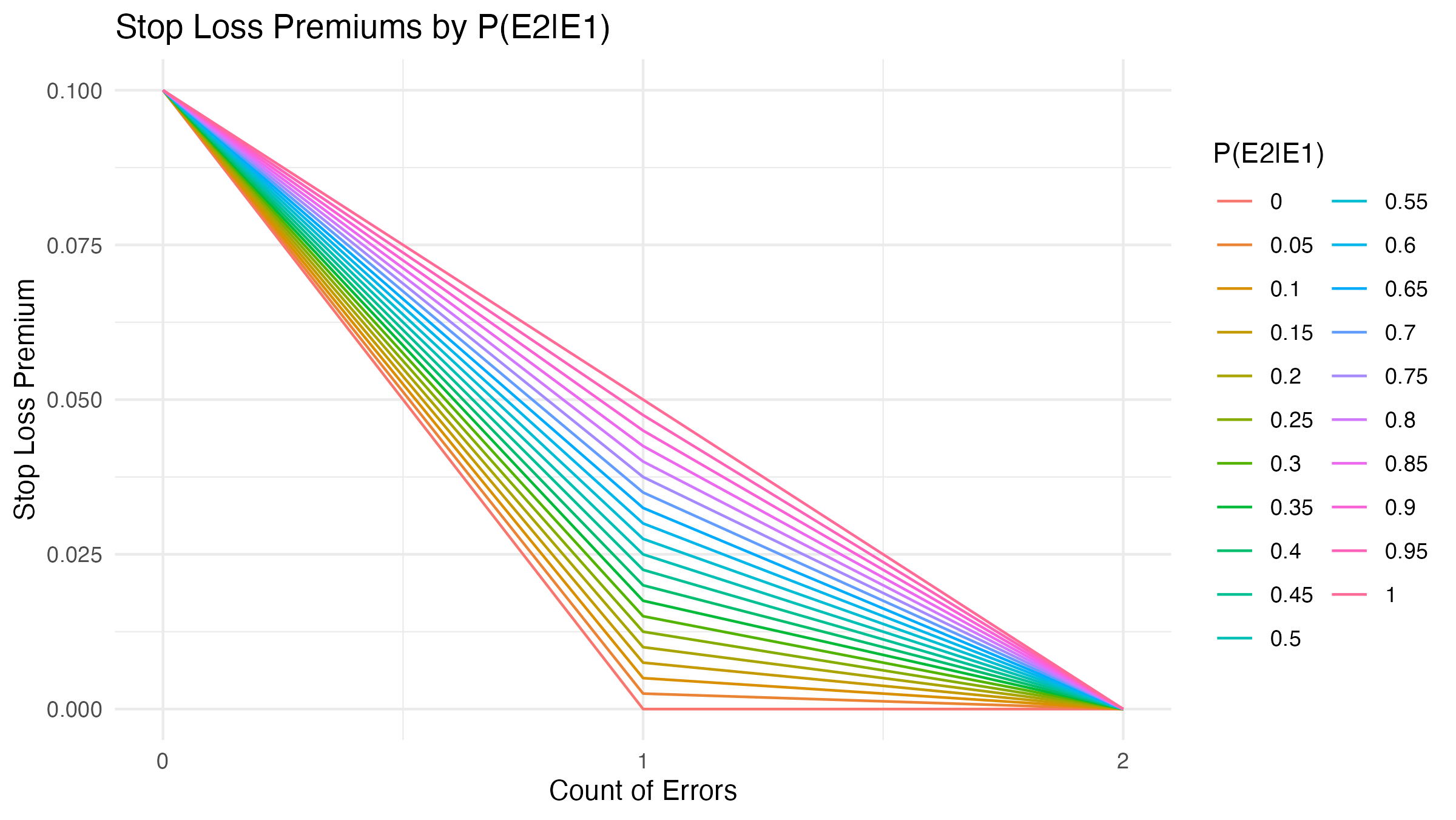}
    \caption{Stop Loss Premiums as a Function of $\mathbb{P}(E_2|E_1)$}
    \label{fig:stoploss_premiums}
\end{figure}

In this setting, the stop-loss premium functions are linearly ordered. First, for each value of $\mathrm{pr}(E_2|E_1)$, $\rho_{E_{\mathcal{P}}}(0) = 2\alpha$ and $\rho_{E_{\mathcal{P}}}(2) = 0$, so there is no difference in stop-loss premiums at those values. However, the stop-loss premium at $1$ differs, and $\mathcal{P}_x$ is stop-loss smaller than $\mathcal{P}_y$ if and only if $x \leq y$. Consequently, the stop-loss premium function is minimized within a set of feasible portfolios exactly when $\mathrm{pr}(E_2|E_1)$ is minimized.
\end{example}

As we will see in the next section, standard measures of Type I error management are either risk neutral or risk-seeking.

\subsection{How to Get Dependent Testing Procedures and Errors through Data Reuse}

We begin by articulating how common practices in the research cycle naturally lead to positively dependent inferential testing procedures and error accumulation.

The previous section describes how the presence of nontrivial dependence between the error events $R_i$ lead to increased probability of a greater count of errors. In this section we show that nontrivial dependence structure is a common scenario facing the portfolio of studies generated by a single dataset $D$. Thus, we isolate and exemplify common testing procedures that yield nontrivial dependence structures through data reuse.

First, we observe how drawing disjoint samples yields independent Type I and Type II error events.

\begin{proposition}
Let $X =(X_1,\ldots, X_n,\ldots)$ be independent. Let $D_1, D_2\subset \mathbb{N}$ be disjoint finite sets. Let $t$ and $s$ be real-valued test statistics on $D_1$ and $D_2$ respectively.

Then every pair of events $E_1,E_2$ of the form 
\begin{equation}
\begin{array}{c}
E_1:t(D_1) \;\square_1\; c_1 \\
E_2:s(D_2)\;\square_2\; c_2,
\end{array}
\end{equation} 
where $\square_1,\square_2 \in \{>,\geq,=,\leq,<\}$ is any pair of relations, are independent.
\end{proposition}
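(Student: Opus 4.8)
The plan is to reduce the claim to the standard measure-theoretic fact that Borel functions of disjoint blocks of an independent family of random variables are themselves independent. First I would set $\mathcal{F}_1 := \sigma\big((X_i)_{i\in D_1}\big)$ and $\mathcal{F}_2 := \sigma\big((X_j)_{j\in D_2}\big)$, and record that, since a test statistic is by definition a Borel-measurable function of the data it uses, the random variable $t(D_1)$ is $\mathcal{F}_1$-measurable and $s(D_2)$ is $\mathcal{F}_2$-measurable.

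Next I would invoke the grouping (``coalition'') property of independence: because $(X_n)_n$ is independent and $D_1\cap D_2=\emptyset$, the $\sigma$-algebras $\mathcal{F}_1$ and $\mathcal{F}_2$ are independent. Since $D_1$ and $D_2$ are finite, this follows from the definition of independence of a finite family together with the $\pi$--$\lambda$ (Dynkin) theorem: the measurable rectangles $\prod_{i\in D_1}A_i$ and $\prod_{j\in D_2}B_j$ form $\pi$-systems generating $\mathcal{F}_1$ and $\mathcal{F}_2$ respectively, factorization of the joint law holds on these rectangles by independence of the coordinates, and hence extends to all of $\mathcal{F}_1$ and $\mathcal{F}_2$.

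Finally I would close the argument by noting that for any relation $\square\in\{>,\geq,=,\leq,<\}$ and any constant $c$, the set $\{y\in\mathbb{R}: y\;\square\;c\}$ is Borel; therefore $E_1 = \{t(D_1)\in\{y:y\;\square_1\;c_1\}\}\in\mathcal{F}_1$ and likewise $E_2\in\mathcal{F}_2$. Independence of $\mathcal{F}_1$ and $\mathcal{F}_2$ then yields $\mathrm{pr}(E_1\cap E_2)=\mathrm{pr}(E_1)\,\mathrm{pr}(E_2)$, which is exactly the assertion, and the same reasoning applies verbatim when $E_1$ and $E_2$ are instead Type~II (or mixed) error events, since those are also events determined by the respective test statistics.

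The only genuine content — and the step I would be most careful about — is the grouping lemma: one must check that factorization of the joint distribution on the generating rectangles really does propagate to the full product $\mathcal{F}_1\times\mathcal{F}_2$, which is precisely where the $\pi$--$\lambda$ argument is needed. Everything else (Borel-measurability of the statistics, Borel-ness of the threshold sets, and the final multiplication) is routine bookkeeping.
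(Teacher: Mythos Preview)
Your proposal is correct and follows essentially the same route as the paper's own proof: the paper also argues that disjointness of $D_1$ and $D_2$ together with independence of the $X_i$ makes the two blocks jointly independent, hence any measurable functions $t$ and $s$ of the respective blocks are independent, and the threshold events inherit this independence. Your write-up is simply a more careful, fully measure-theoretic elaboration of those three sentences (making the grouping lemma and the $\pi$--$\lambda$ step explicit), which is entirely appropriate.
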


\begin{proof}
Since $D_1$ and $D_2$ are disjoint, by the iid assumption the joint distributions of $D_1$ and $D_2$ are jointly independent. Thus, for every measurable $t:(X_i)_{i\in D_1}\to\mathbb{R}$ and $s:(X_j)_{j\in D_2}\to\mathbb{R}$ we have that the random variables $t$ and $s$ are independent. Consequently, the events $E_1:|t(D_1)| \;\square_1\; c_1$ and $|s(D_2)|\;\square_2\; c_2,$ are independent.
\end{proof}

Conversely, reuse of data can result in positively dependent test statistics. We consider the simple case of the reuse of a control group in a two-sample $t$-test for the difference in group means.

\begin{proposition}
Let $C$ be a control group and $T_1,T_2$ be treatment groups. Assume that 
\begin{equation}
\begin{array}{cc}
C: y_i \sim Y & \text{iid with $2n$ units,} \\
T_1: x_i \sim X & \text{iid with $n$ units,} \\
T_2: z_i \sim Z & \text{iid with $n$ units,} 
\end{array}
\end{equation}
Let $A,B\subseteq [2n]$ be fixed subsets of size $n$. Let $k = k(A,B) = |A\cap B|.$

Suppose further that the standard deviations of $X,Y,$ and $Z$ are the same and known:
\begin{equation}
\sigma := \sigma_X = \sigma_Y = \sigma_Z > 0.
\end{equation}

We consider the design where we compare $T_1$ vs. $C$ using $x = (x_i)_{i\leq n}$ and $y_A = (y_j)_{j\in A}$, and $T_2$ vs. $C$ using $z = (z_i)_{i\leq n}$ and $y_B = (y_j)_{j\in B}$.

Let 
\begin{equation}
\begin{array}{c}
t_1 = \frac{\overline{x}-\overline{y_A}}{\sqrt{2\sigma^2 n^{-1}}} \\
t_2 = \frac{\overline{z}-\overline{y_B}}{\sqrt{2\sigma^2 n^{-1}}}
\end{array}
\end{equation}
be the two-sample $z$-statistics for the above design. Then
\begin{equation}
\mathrm{cov}(t_1,t_2) = \frac{k}{2n} \geq 0.
\end{equation}

The Pearson correlation $\rho(t_1,t_2)$ satisfies
\begin{equation}
\rho(t_1,t_2) = \frac{k}{2n}.
\end{equation}
\end{proposition}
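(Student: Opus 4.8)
The plan is to exploit that $t_1$ and $t_2$ are affine functionals of the three mutually independent samples $x=(x_i)_{i\le n}$, $z=(z_i)_{i\le n}$, and $y=(y_j)_{j\le 2n}$, so that the whole computation reduces to bilinearity of covariance plus a single sum over shared indices. First I would record the independence consequences: since $x$, $z$, $y$ are jointly independent, $\overline{x}$ is independent of both $\overline{z}$ and $\overline{y_B}$, and $\overline{z}$ is independent of both $\overline{x}$ and $\overline{y_A}$. Because the common denominator $\sqrt{2\sigma^2 n^{-1}}$ is a deterministic constant, it suffices to compute $\mathrm{cov}(\overline{x}-\overline{y_A},\,\overline{z}-\overline{y_B})$ and then divide by $2\sigma^2 n^{-1}$.

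Next I would expand by bilinearity,
\begin{equation}
\mathrm{cov}(\overline{x}-\overline{y_A},\,\overline{z}-\overline{y_B}) = \mathrm{cov}(\overline{x},\overline{z}) - \mathrm{cov}(\overline{x},\overline{y_B}) - \mathrm{cov}(\overline{y_A},\overline{z}) + \mathrm{cov}(\overline{y_A},\overline{y_B}),
\end{equation}
and observe that the first three terms vanish by the independence noted above, leaving only $\mathrm{cov}(\overline{y_A},\overline{y_B})$. Writing $\overline{y_A}=n^{-1}\sum_{i\in A} y_i$ and $\overline{y_B}=n^{-1}\sum_{j\in B} y_j$, and using that the $y_j$ are iid with variance $\sigma^2$ (so $\mathrm{cov}(y_i,y_j)$ equals $\sigma^2$ when $i=j$ and $0$ otherwise), the double sum collapses onto the diagonal $A\cap B$:
\begin{equation}
\mathrm{cov}(\overline{y_A},\overline{y_B}) = \frac{1}{n^2}\sum_{i\in A}\sum_{j\in B}\mathrm{cov}(y_i,y_j) = \frac{\sigma^2\,|A\cap B|}{n^2} = \frac{k\sigma^2}{n^2}.
\end{equation}
Dividing by $2\sigma^2 n^{-1}$ gives $\mathrm{cov}(t_1,t_2)=k/(2n)$, which is nonnegative since $k\ge 0$.

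For the Pearson correlation I would compute the two variances by the same device. As $\overline{x}$ and $\overline{y_A}$ are independent, $\mathrm{Var}(\overline{x}-\overline{y_A}) = \sigma^2/n + \sigma^2/n = 2\sigma^2/n$, so after dividing by $2\sigma^2 n^{-1}$ we obtain $\mathrm{Var}(t_1)=1$, and identically $\mathrm{Var}(t_2)=1$. Hence $\rho(t_1,t_2) = \mathrm{cov}(t_1,t_2)/\sqrt{\mathrm{Var}(t_1)\mathrm{Var}(t_2)} = k/(2n)$.

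There is no genuinely hard step here: the only points requiring care are invoking \emph{mutual} (not merely pairwise) independence of the three samples to kill the three cross terms, and keeping straight which index set each sample mean ranges over. I would close with two remarks as sanity checks: the argument uses only second moments, never the means of $X,Y,Z$, consistent with the statistics being centered by construction; and the result interpolates correctly between the extremes — $k=0$ (disjoint control subsamples) recovers $\rho=0$, matching the preceding independence proposition, while full reuse $k=n$ gives $\rho=1/2$ rather than $1$, since each statistic still carries an independent treatment arm.
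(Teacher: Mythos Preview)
Your proof is correct and essentially identical to the paper's: both reduce $\mathrm{cov}(t_1,t_2)$ to $\mathrm{cov}(\overline{y_A},\overline{y_B})$ via independence of the three samples, the only cosmetic difference being that the paper computes the latter by splitting $\overline{y_A}=y_\cap+y_{A\setminus B}$ and $\overline{y_B}=y_\cap+y_{B\setminus A}$ (so the answer becomes $\mathrm{var}(y_\cap)$) while you collapse the double sum onto the diagonal directly. Your variance argument for $t_1,t_2$ is actually slightly cleaner than the paper's, which asserts $t_i\sim\mathcal{N}(0,1)$ to read off $\sigma_{t_i}=1$ even though normality of $X,Y,Z$ was never assumed in the statement.
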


\begin{proof}
\textit{Covariance Bounds}: We simplify $\mathrm{cov}(t_1,t_2)$ as follows. First, by construction observe that
\begin{equation}
\mathrm{cov}(t_1,t_2) = \frac{n}{2\sigma^2} \mathrm{cov}(\overline{x}-\overline{y_A},\overline{z}-\overline{y_B}).
\end{equation}

Then, 
\begin{equation}
\begin{array}{ccc}
\mathrm{cov}(\overline{x}-\overline{y_A},\overline{z}-\overline{y_B})& = & \mathbb{E}[(\overline{x}-\overline{y_A})(\overline{z}-\overline{y_B}))] - \mathbb{E}[\overline{x}-\overline{y_A}]\mathbb{E}[\overline{z}-\overline{y_B}] \\
& = & (\mathbb{E}[\overline{x}\overline{z}] - \mathbb{E}[\overline{x}\overline{y_B}] - \mathbb{E}[\overline{z}\overline{y_A}] + \mathbb{E}[\overline{y_A}\overline{y_B}] ) \\
& & -(\mathbb{E}[\overline{x}]\mathbb{E}[\overline{z}]-\mathbb{E}[\overline{x}]\mathbb{E}[\overline{y_B}] - \mathbb{E}[\overline{z}]\mathbb{E}[\overline{y_A}] + \mathbb{E}[\overline{y_A}]\mathbb{E}[\overline{y_B}]).
\end{array}
\end{equation}

Since $x\ind z$, $x\ind y_B$, $z\ind y_A$ this reduces to 
\begin{equation}
\mathrm{cov}(\overline{x}-\overline{y_A},\overline{z}-\overline{y_B}) = \mathrm{cov}(\overline{y_A},\overline{y_B}).
\end{equation}

We now further decompose $\overline{y_A}$ and $\overline{y_B}$ into the sums of independent random variables. Let
\begin{equation}
\begin{array}{c}
y_{\cap} = \frac{1}{n}\sum_{i\in A\cap B} y_i \\
y_{A\setminus B} = \frac{1}{n}\sum_{i\in A\setminus B} y_i \\
y_{B\setminus A} = \frac{1}{n}\sum_{i\in B\setminus A} y_i.
\end{array}
\end{equation}

Then $\overline{y_A} = y_\cap + y_{A\setminus B}$ and $\overline{y_B} = y_\cap + y_{B\setminus A}$, with $y_\cap, y_{A\setminus B},$ and $y_{B\setminus A}$ all pairwise independent.

By the same argument as above, 
\begin{equation}
\begin{array}{ccc}
\mathrm{cov}(\overline{y_A},\overline{y_B}) & = & \mathrm{cov}(y_{\cap},y_{\cap}) \\
& = & \mathrm{var}(y_{\cap}).
\end{array}
\end{equation}

Now, $\mathrm{var}(y_{\cap})$ is the variance of a sum of $k$ iid random variables with distribution $\frac{1}{n} Y$ and therefore
\begin{equation}
\mathrm{var}(y_{\cap}) = \frac{k \sigma_Y^2}{n^2}.
\end{equation}

Hence
\begin{equation}
\mathrm{cov}(t_1,t_2) = \frac{n}{2\sigma^2} \frac{k\sigma^2}{n^2} = \frac{k}{2n}.
\end{equation}

\textit{Correlation Bounds}: Having calculated $\mathrm{cov}(t_1,t_2)$, to bound $\rho(t_1,t_2)$ it suffices to observe that since $t_1, t_2\sim \mathcal{N}(0,1)$, $\sigma_{t_1}=\sigma_{t_2} =1$ so that 
\begin{equation}
\rho(t_1,t_2) = \mathrm{cov}(t_1,t_2) = \frac{k}{2n}.
\end{equation}
\end{proof}

Consequently, data reuse results in positively dependent testing procedures. We turn now to examples of how data reuse can affect the incidence of errors in multiple applied contexts.

Our first example is a discussion of an example from \cite{fay_statistical_2022}). In this example, we compare the distribution of errors between two study portfolios comparing novel treatments to a control group. In one design, the control group is reused across all treatments thereby inducing dependent testing procedures through data reuse. In the other, disjoint sets of control units are used.

\begin{example}
Suppose that we have a control $C$ and several treatment groups $T_1,\ldots, T_m$, with $7$ treatments. We assume that these groups share the same distribution $\sim \mathcal{N}(0,1)$.

In Design 1, we test for the difference in mean between $T_i$ and $C$, where both $T_i$ and $C$ have 100 patients and each $T_i$ is compared to the same $C$ group using a $t$-test at the $\alpha = 0.05$ level.

In the second design, for each $T_i$ is compared to a control group $C_i$ disjoint from the rest of the treatments using a $t$-test at the $\alpha = 0.05$ level. Recall that the expected number of errors is $7\alpha$ in both designs.

We simulated these designs 10000 times. Design 1 had a Type I error frequency of $0.05104$ while Design 2 had a false rejection rate of $0.05124$. 
In Design 1, the test statistics had high pairwise correlation whereas the tests in Design 2 were independent (Fig.~\ref{fig:reused_control_correlated_test_statistics}). The distribution of errors was markedly different between Design 1 and Design 2, as shown in Table \ref{tab:reused_control_error_distribution}. While Design 1 had an overall lower rate of familywise error, Design 2 is strictly less risky in the stop-loss order. (Fig.~\ref{fig:reused_control_stoploss}). This has an important epistemic interpretation: in the event that subsequent replication efforts testing of $T_1$ vs. $C$ indicates that $H_{01}$ is correct, then there is a strong probability that at least one of the other comparisons is incorrect since the conditional probability of $\geq 2$ errors given an erroneous rejection $R_1$ is much higher in Design 1 than it is in Design 2 under the global null.

\begin{figure}[htbp]\label{fig:reused_control_correlated_test_statistics}
    \centering
    \includegraphics[width=0.8\textwidth]{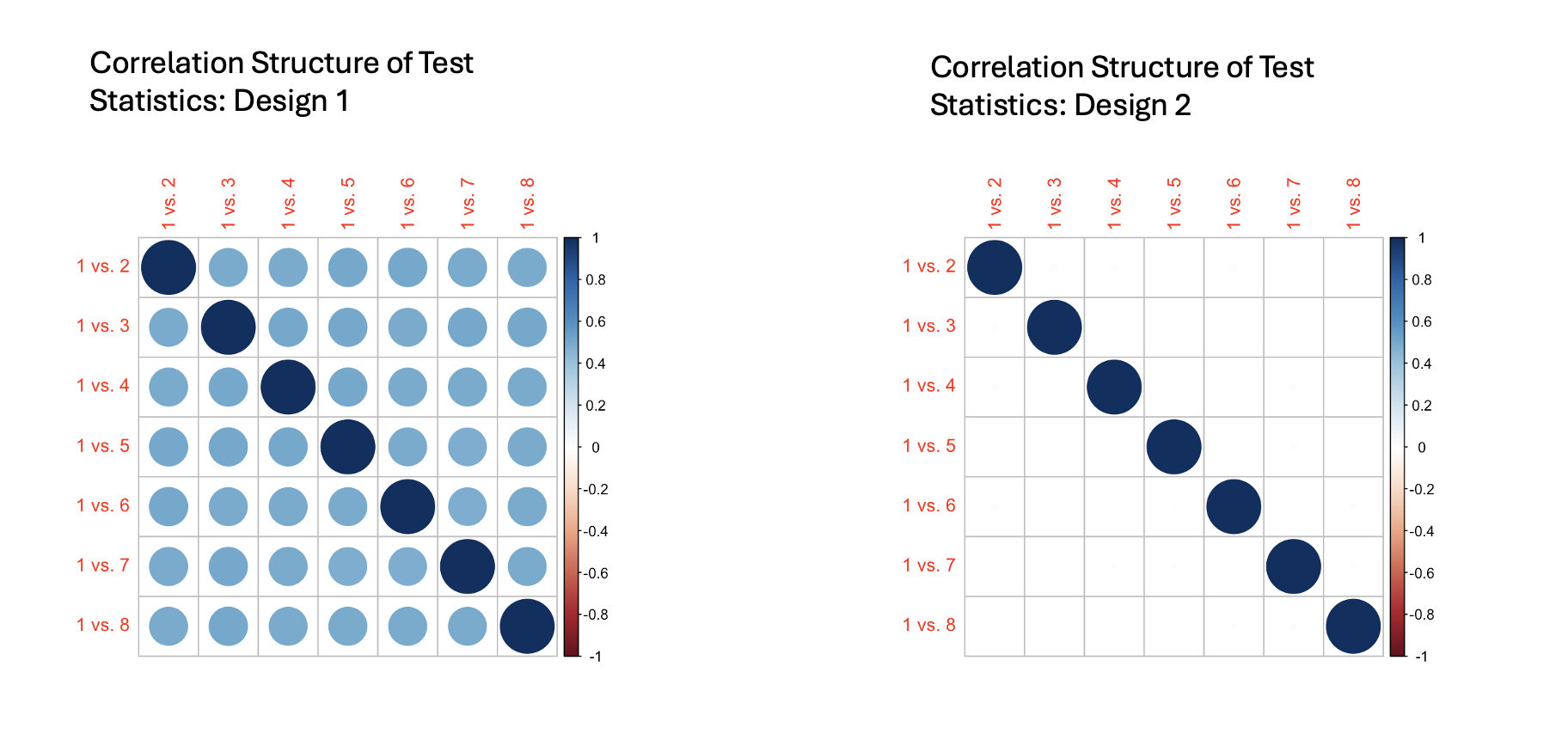}
    \caption{Pairwise Dependence in Test Statistics by Design}
    \label{fig:distribution_of_errors}
    \end{figure}

\begin{table}[h!]\label{tab:reused_control_error_distribution}
\centering
\begin{tabular}{c|c c|c c}
\toprule
\multirow{2}{*}{Number of False Rejections} & \multicolumn{2}{c|}{Design 1} & \multicolumn{2}{c}{Design 2} \\ \cline{2-5}
 & N & Frequency & N & Frequency \\ 
\midrule
0 & 7682 & 0.7682 & 6896 & 0.6896 \\ 
1 & 1558 & 0.1558 & 2661 & 0.2661 \\ 
2 & 468  & 0.0468 & 404  & 0.0404 \\ 
3 & 164  & 0.0164 & 38   & 0.0038 \\ 
4 & 74   & 0.0074 & 1    & 0.0001 \\ 
5 & 39   & 0.0039 & -    & -      \\ 
6 & 9    & 0.0009 & -    & -      \\ 
7 & 6    & 0.0006 & -    & -      \\ 
\bottomrule
\end{tabular}
\caption{Comparison of Distribution of Error by Design.}
\end{table}

\begin{figure}[htbp]\label{fig:reused_control_stoploss}
    \centering
    \includegraphics[width=0.8\textwidth]{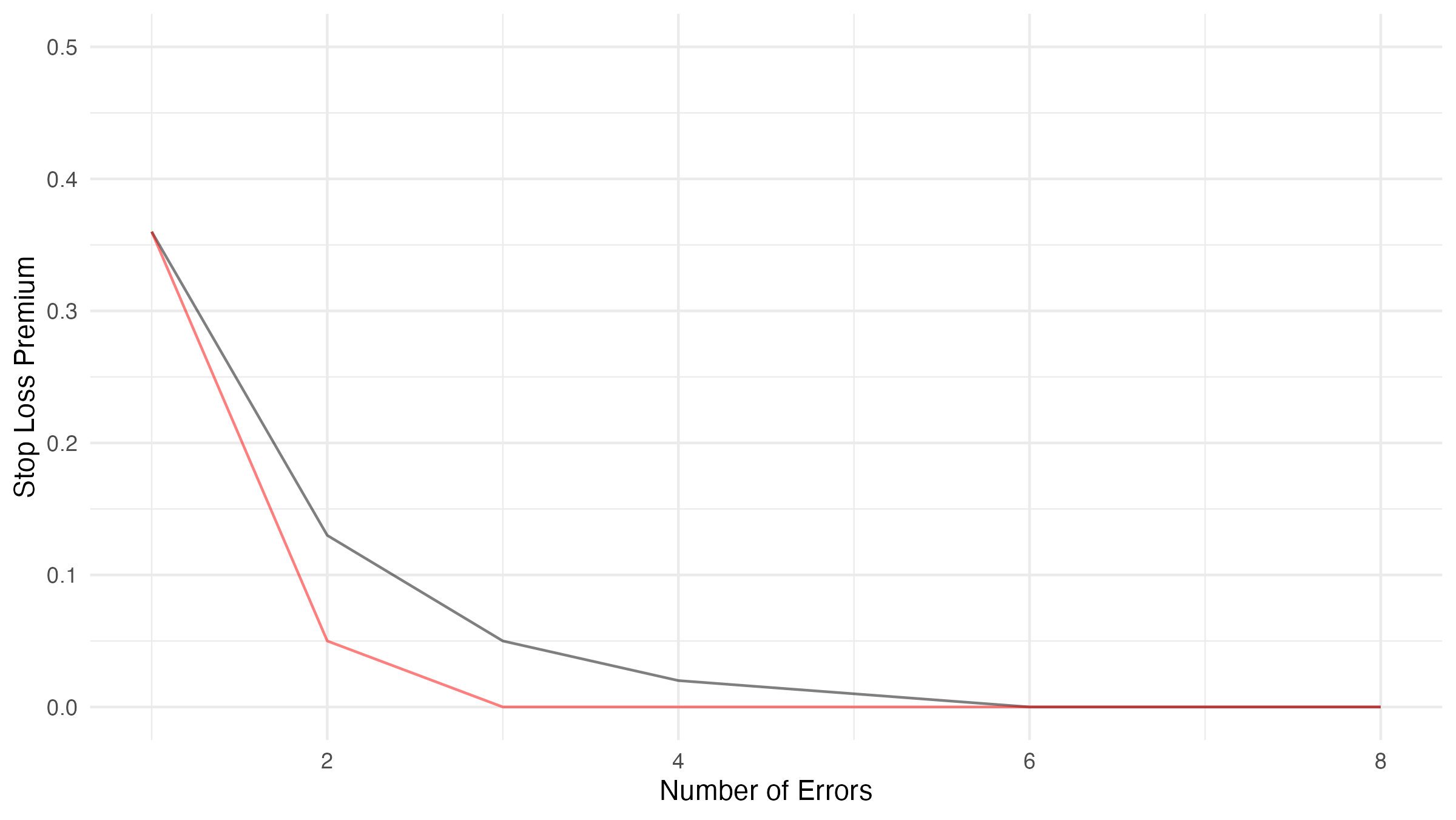}
    \caption{Stop Loss Premiums For Design 1 (Black) vs. Design 2 (Red)}
    \label{fig:distribution_of_errors}
    \end{figure}
\end{example}

For our next example, we consider an example where a difference in survival up to $1$ year was performed, and then a subsequent study using the same data was performed evaluating survival difference at $5$ years. While the endpoints are different, the testing procedures are still dependent. This is an example of compounding error through a chain of incorrect research, wherein a Type I error occurs (e.g. a between-group survival difference at $1$ year) and then related hypotheses are formed and then reassessed using the same observational units (e.g. a between-group survival difference at $5$ years).

\begin{example}
Consider two groups $A$ and $B$ with $100$ patients in each. Assume that the survival distributions of $A$ and $B$ are the same; namely, that they are generated by a Weibull distribution with mean survival of $2.5$ years and scale parameter $2$. We tested for the difference in survival truncated at 1 year and at 5 years using the log-rank test. Let $R_1$ be the event that rejects at 1 year, and $R_5$ be the event that rejects at the 5 year mark.

In our simulation of 10,000 instances of this design, the contingency table of errors was as follows:

\begin{table}[h]
    \centering
    \caption{Contingency Table for Rejection of Hypothesis at 1 and 5 Years}
    \begin{tabular}{c|ccc}
          &   $\neg R_1$ &   $R_1$ \\
        \midrule
        $\neg R_5$ & 9406 &  77 \\
    $R_5$ &   86 & 431 \\
    \end{tabular}
    \label{tab:contingency_table}
\end{table}

In this simulation, the frequency of a Type I error at $5$ years given a Type I error at $1$ year was $\frac{431}{431+77} = 84.8\%$. The Pearson correlation between the log-rank test statistics was $0.9726$. In this example, a propagation of errors occurred due to the fact that survival information at $1$ year provided considerable information leading to dependent testing.

One possible design that eliminates the dependence in Type I errors under the null is to randomly sort the cohort into two groups and perform a gatekeeping procedure: if $R_1$ is rejected in group $A$, then test $R_5$ in group $B$. By testing $R_5$ within group $B$, the test statistics, and consequently the Type I error rates, will be independent.
\end{example}

\subsection{Limitations of Existing Measures of Type I Error Control}

We now discuss how common measures of Type I error control relate to risk preferences regarding the distribution of error. While typically restricted to measuring Type I error rates, these measures are well-defined for any sequence of error events $E_1,\ldots, E_n$. In this context, we examine the extent to which management of common measures of Type I error rates relate to risk-averse decisionmaking. In this setting, we can completely describe the distribution of the count of errors in the two-study portfolio $\mathcal{P} = \{ E_1,E_2 \}$. Assume that $\mathrm{pr}(E_i) = \alpha$.

First, observe that regardless of the dependence between $E_1$ and $E_2$, the $PCER$ is $\alpha$. Thus, $PCER$ is insensitive to the correlational structure of the testing procedure.

\begin{proof}
By the linearity of expectation, PCER can be rewritten as 
\begin{equation}
PCER(\mathcal{P}) = \frac{1}{m}\sum\mathbb{E}\left[\mathbb{1}_{E_i} \right].
\end{equation}
Suppose that each testing procedure is performed at level $\alpha$. Then 
\begin{equation}
\mathbb{E}\left[\mathbb{1}_{E_i}\right] = \alpha
\end{equation}
and so 
\begin{equation}
PCER(\mathcal{P}) = \frac{m}{m} \alpha = \alpha.
\end{equation}
\end{proof}

An immediate consequence of this is that the status quo policy of considering a result statistically significant so long as $\mathrm{pr}(E) \leq \alpha$ is neutral to the dependence structure of the events $E_i$, an attitude which may worry a risk-averse investigator.

By contrast, the FWER provides one measure of the spread of the distribution of errors, since 
\begin{equation}
FWER(\mathcal{P}) = \mathrm{pr}\left(\sum\mathbb{1}_{E_i} \geq 1\right).
\end{equation}

In the context of the two-event portfolio $\mathcal{P}$, this reduces to
\begin{equation}
FWER(\mathcal{P}) = \mathrm{pr}(E_2) + \mathrm{pr}(E_1)(1-\mathrm{pr}(E_2|E_1))
\end{equation}
and is thus minimized when $\mathrm{pr}(E_2|E_1) = 1$ at value $\alpha$ and maximized at $2\alpha$ when $\mathrm{pr}(E_2|E_1) = 0$. On that basis alone one might conclude that tests with positive dependence---those for which $\mathrm{pr}(E_2|E_1) > \mathrm{pr}(E_2)$---are desirable. However, the reduced familywise error rate comes at the cost of shifting the distribution of the count of errors. If one were to focus only on FWER, one might think that positively-dependent testing procedures are unequivocally more desirable than independent testing procedures.

The False Discovery Rate is plagued by the same preference as Family-wise Error Rates towards positively correlated tests. Since only Type I errors are penalized by $FDR$, the worst case expectation occurs when the global null $\bigcap H_{0i}$ holds. In that case all rejections are false positives, so the random variable $\frac{\# \text{Type I Errors}}{\# \text{Rejections}}$ is a Bernoulli random variable which is $0$ if there is no error and $1$ otherwise. Consequently, under the global null,
\begin{align}
    FDR(\mathcal{P}) & = \mathbb{E}\left[\frac{\# \text{Type I Errors}}{\# \text{Rejections}} \right] \\
    & = \mathbb{P}(E_1\cup E_2) \\
    & = FWER(\mathcal{P}).
\end{align}
Thus, under the global null, $FDR(\mathcal{P})$ and is minimized under perfectly positively-dependent testing procedures.

\subsection{Case Study II: Type II Errors under the Global Alternative and Fixed Level $\alpha$}\label{sec:power_tradeoff}

In this section, we consider a paradigmatic example that might face an investigator. Suppose that the level $\alpha$ is fixed (for example, $\alpha = 0.05$). The investigator for study $S$ may determine the Type II error rate of their study under a specific effect size $\Delta_0$ and known sample size (vector) $\overline{n}$:
\begin{equation}
\pi_{S}(\alpha, \Delta_0,\overline{n}).
\end{equation}
On one hand, the investigator wants to ensure adequate power for his or her study by using a large amount of data; on the other hand, we have seen how data reuse can result in dependent error events.

In Section \ref{sec:typeI_error} we discussed how investigators could reduce sample size while retaining level $\alpha$ error control to reduce or fully eliminate positive dependence in the testing procedures that gave rise to riskier inferential portfolios. In that setting, we could exhibit portfolios $\mathcal{P}_1$ and $\mathcal{P}_2$ with identical expected counts of Type I errors under the null such that the testing procedure $\mathcal{P}_1$ with independent tests was preferred to $\mathcal{P}_2$ with dependent tests by ensuring that the studies in $\mathcal{P}_1$ were using disjoint sets of data.

In the current setting of Type II errors using tests of a fixed level $\alpha$, however, there is generally a tradeoff between power and sample size. In general, portfolios which have different means are not stop-loss orderable, meaning that the preference between $\mathcal{P}_1$ and $\mathcal{P}_2$ depends on the specific utility function of the risk-averse agent in question.

For the remainder of the section we assume for simplicity that across the portfolio of studies, the alternative $H_{1i}: \theta_i = \Delta_{0i}$ is a point hypothesis, and that the global alternative hypothesis is true. 
In Section \ref{sec:portfolio_optimization} we will see how an expected utility approach can accommodate more realistic scenarios.

Our first observation is that depending on the data available to the investigator, reduction in sample size to avoid data reuse generally results in a decrease in power.

Let $S_i(\overline{n})$ be the study using test $T_i$ with sample vector $\overline{n}$. Assume that Type II error rate given $\theta\in \Theta$ is given by 
\begin{equation}
\pi_{S_i}(\alpha,\theta, \overline{n}).
\end{equation}

Then under the global alternative, the expected count of errors for the portfolio $\mathcal{P}(\overline{n}_1,\ldots, \overline{n}_k) = \{S_1(\overline{n}_1),\ldots, S_k(\overline{n}_k)\}$ depending on sample size vectors $\overline{n}_1,\ldots, \overline{n}_k$ satisfies
\begin{equation}
\mathbb{E}[u(\mathcal{P}(\overline{n}_1,\ldots, \overline{n}_k))] = -\int\left( \sum_{i} \pi_{S_i}(\alpha,\theta, \overline{n}_i) p(\theta)\right)
\end{equation}
where $p(\theta)$ is a prior probability density over $\theta$.

Fixing $\alpha$ and assuming the alternatives $H_{0i}: \theta = \Delta_{0i}$ this reduces to
\begin{equation}
\mathbb{E}[u(\mathcal{P}(\overline{n}_1,\ldots, \overline{n}_k))] = -\sum_{i} \pi_{S_i}(\alpha,\Delta_{0i}, \overline{n}_i).
\end{equation}

Assuming that the Type II error rate function $\pi$ is strictly monotonic in $\overline{n}$, reducing sample size implies an increase in the incidence of Type II errors across the portfolio: if each $\overline{m}_i < \overline{n_i}$ then 
\begin{equation}
\mathbb{E}[u(\mathcal{P}(\overline{m}_1,\ldots, \overline{m}_k))] \leq \mathbb{E}[u(\mathcal{P}(\overline{n}_1,\ldots, \overline{n}_k))].
\end{equation}

The kinds of large-scale datasets we have in mind routinely have tens of thousands---if not hundreds of thousands---of data points. Power is consequently not hard to come by, and the tradeoff between ensuring independence of testing procedures versus ensuring adequate power to detect a meaningful effect can be very small.

\begin{example}
Suppose that we are comparing a control group $C$ to two treatment groups $T_1$ and $T_2$. Assume that the alternative hypothesis is given by 
$C \sim \mathcal{N}(0,1)$ and $T_1, T_2 \sim \mathcal{N}(0.5,1)$. Let $\Delta_{01} = \Delta_{02} = 0.5$ be the effect size for the alternative for both studies.

Suppose that there are $|C| = 1000$ control units and $|T_1| = |T_2| = 500$ treatment units in each arm.

Under the global alternative hypothesis, the Type II error rate for the two-sample $t$-test assuming each the $500$ unit treatment groups are compared to the $1000$ unit control group is $< 10^{-10}$. Thus, the expected count of Type II errors of this portfolio is
\begin{equation}
0\geq \mathbb{E}[\{S_1(1000,500),S_2(1000,500)\}] \geq -2\times10^{-10}.
\end{equation}

If, however, we partition the control group into $500$ units to compare to $T_1$ and $500$ to compare to $T_2$, the Type II error rate is $\approx 1.4\times 10^{-9}$ per study. Thus, 
\begin{equation}
0\geq \mathbb{E}[\{S_1(500,500),S_2(500,500)\}] \geq -3\times10^{-9}.
\end{equation}

In the latter design, we have ensured that the testing procedures are independent at the cost of exposure to $\leq 3\times10^{-9}$ more Type II errors.
\end{example}

However, a key use case for large-scale databases is for the analysis of rare subgroups for which recruitment into randomized control trials can be exceedingly difficult. In these circumstances, the tradeoff between reducing testing dependence and achieving power can be intolerably high.

\begin{example}\label{ex:low_sample_tradeoff}
Assume that we are comparing a control group $C$ to two treatment groups $T_1$ and $T_2$.

In our first dataset $D_1$, we have $n_C = 100$ control units, and $n_1 = n_2 = 50$ treatment units each. We assume that each of these groups is drawn from a normal distribution, where 
$C \sim \mathcal{N}(0,1)$ and $T_1, T_2 \sim \mathcal{N}(0.5,1)$. We aim to compare performing the $t$-test at $\alpha = 0.05$ under two data usage scenarios:
\begin{enumerate}
\item Data gluttony: reuse the full control group $C$ in both comparisons, and
\item Reuse avoidant: compare $T_1$ and $T_2$ each to disjoint groups of $50$ units in $C$.
\end{enumerate}

Data gluttony in this setting is justified by seeking to minimize the average rate of Type II errors, while reuse avoidance is seeking to eliminate the dependence on testing procedures.

Under the data gluttony strategy, the Type II error rate under the alternative specified by $(\mu_i,\sigma_i) = (0.5,1)$ for $i = 1,2$ vs. $(\mu_0,\sigma_0) = (0,1)$ is $\approx 18.2\%$. By contrast, the Type II error rate under the reuse avoidance strategy is $\approx 30.3\%$.

Thus, the expected count of errors in the two-study portfolio is $0.364$ under the data gluttony strategy and is $0.606$ under the reuse avoidance strategy, a $66.5\%$ increase in portfolio disutility.
\end{example}

The above example shows how, in small sample settings, adopting a data reuse minimizing approach can dramatically reduce the portfolio utility under the alternative. However, for sufficiently large samples the tradeoff between the expected count of errors and the dependence between errors diminishes.

\subsection{Choice of Utility Function and Portfolio Optimization}\label{sec:portfolio_optimization}

\subsection{The Utility of a Study}
The Expected Utility Theory of von Neumann and Morgenstern gives a necessary and sufficient condition for agents to rationally choose between portfolios on the basis of their utility function \cite{mas1995microeconomic}.

While much research is conducted using rule-of-thumb values for the level $\alpha$ (most ubiquitously $\alpha = 0.05)$, in the context of inferential tasks, this approach has been applied to balancing the level $\alpha$ and power $1-\beta$. Since power has diminishing marginal returns in $\alpha$, for a fixed sample size one can optimize the choice of $\alpha$ to minimize the expected count of inferential errors given a prior over the parameter space. Discussions of identifying the optimal balance between power and significance have been ongoing since the advent of null hypothesis statistical testing \cite{lehmann_uniformly_2022} and have recently resurfaced in debates about significance testing in psychology (\cite{lakens_justify_2018}).

However, seeking to avoid inferential errors across a portfolio is insufficient as an account of conducting scientific studies. Case-in-point: suppose that the utility of all studies have utility $u$ bounded above by $0$ and expected negative utility (as in the error avoidance utility function of a portfolio) and that the utility of a portfolio is the sum of the utility of each study. Then for any portfolio $\mathcal{S} = \{S_1,\ldots, S_n\}$ of studies and new study $\{S_{n+1}\}$, adding $S_{n+1}$ to the portfolio would be strictly worse since 
\begin{equation}
u(\mathcal{S}\cup\{S_{n+1}\}) = u(\mathcal{S}) + u(S_{n+1}) \leq u(\mathcal{S})
\end{equation}
and, in particular,
\begin{equation}
\mathbb{E}[u(\mathcal{S}\cup S_{n+1})] \leq \mathbb{E}[u(\mathcal{S})].
\end{equation}
Thus, under such a utility function a rational agent is never justified in conducting an additional study. To address this problem, analysis of some anticipable social utility can be constructed.

\begin{example}
One choice of utility function would reflect some consideration of the impact of the research to improve social welfare vis-à-vis an analysis of the expected annual change in quality-adjusted life years (QALYs) one would expected as a function of the true parameter $\theta$ and the statistical decision reached by the study. For example, if when comparing a novel treatment $T_1$ against an existing gold standard $T_0$ of equal cost with $\theta = ATT(Y|T) = \mathbb{E}[Y|do(T_1)]-E[Y|do(T_0)]$ the average difference in QALYs among the treated, a candidate utility function might be given by 
\begin{equation}
u(\theta, \text{decision}) = \begin{cases}
\theta & \text{if $H_0: \theta < 0$ is rejected } \\ 
\min(-\theta,0) & \text{if $H_0: \theta < 0$ is not rejected}.
\end{cases}
\end{equation}
where it is assumed that the null $H_0:\theta < 0$ is rejected, all patients receiving $T_0$ will receive $T_1$ instead; and if $H_0:\theta = 0$ is not rejected then all patients will continue to receive treatment $T_0$. In these cases, the social utility still depends on the true value of the parameter $\theta$: if we falsely reject the one-sided $H_0$ then negative utility would be generated by converting all patients to $T_1$, while if we fail to reject $H_0$ but $\theta > 0$ then we have failed to improve the QALYs of patients by continuing to assign patients to the status quo $T_0$.
\end{example}

With all this being said, the choice of utility function can be fraught, especially in the context of a committee made up of various stakeholders with different values. In such a context, having a measure of inferential downside in isolation may be useful. This can be especially true of basic or speculative studies, which may have long-term upside in societal utility that may be hard to anticipate or quantify. In such a case, the decision study may be permitted to be performed on the data but its $\alpha$ and $\beta$ optimized according to some function of the count of inferential error. In this context, the utility function might be a nonlinear transformation of the count of errors that would reflect an explicitly risk-averse attitude.

\begin{example}
For example, the utility function might be $u(\mathcal{P}) = -(\sum (\#E_I + \#E_2))^2$ penalizing compounding inferential errors is risk-averse, as it assigns greater disutility to a positively-dependent tests.
\end{example}

\subsection{Performing Portfolio Optimization}
Given a utility function $u$ and set of variable parameters $z$ associated with a portfolio $\mathcal{P} = \mathcal{P}(z)$ and a feasible set $Z$, one can pose the optimization problem
\begin{equation}
z \in \text{argmax}_{z\in Z} \mathbb{E}[u(\mathcal{P}(z))].
\end{equation}

Techniques for analyzing this optimization problem are well known in finance (for example, \cite{joshi_introduction_2013}) and actuarial science (\cite{kaas_modern_2008}). In this section we discuss only grid-search based approximation of optimal portfolios to preserve generality.

Applying this framework to this topic, the inputs required to perform portfolio optimization are:
\begin{enumerate}
\item A fixed list of hypotheses to evaluate $\{H_{0i}\}_{i\leq n}$,
\item Families of testing procedures varying in sample size vector for each hypothesis,
\item A utility function $u$ which is a function of the statistical decisions (reject/fail to reject) and perhaps the parameter $\theta$,
\item A joint prior distribution for the parameters $\theta$ so that the expected utility of a portfolio can be calculated, and 
\item The sample size vector of $D$,
\item A description of the subsampling procedures, such as the vector of per-study fractions of data $\overline{r} = (r_1,\ldots, r_n)$ in $D$ to sample without replacement.
\end{enumerate}

At the level of multiple studies, the utility-maximizing choices of levels $\alpha_i$ and $\overline{r}$ can be approximated using a grid search technique.

The challenges with this approach are primarily logistical, not computational: it requires the knowledge of a fixed number of hypotheses of interest, an analysis plan for each, the stipulation of a utility function, and a joint prior over all the effects simultaneously. While some datasets may be well-poised to conduct a portfolio optimization exercise due to their data governance and control over which studies are approved, datasets not managed in such a manner are not suitable for this approach.

To elaborate on Step 6 above, in the following section we discuss a pragmatic approach to data usage that individual investigators can perform to reduce their data usage while maintaining tolerable control over inferential errors.

\section{Subsampling and Data Temperance}

In this section we describe a pragmatic complement to data gluttony that we term data temperance. Data temperance can be understood as the policy that identifies the minimum amount of data necessary to perform the inferential task in question and then subsamples from the dataset $D$ a sample of the requisite size.

Given the epistemic risk posed by dependent testing procedures, it may surprise some that we do not require the subsamples to be mutually disjoint. In the absence of resource constraints, performing statistical evaluations of hypotheses on disjoint samples would be preferable. However, requiring disjointness poses serious challenges to the investigator.

Most practically, it poses a coordination problem between independent researchers: if the data used must be partitioned amongst investigators, the data must be both allocated and verified to be disjoint across investigators. For large datasets accessible by many investigators simultaneously, this is a major challenge. Similarly, partitioning gets harder when there are analyses directed at small subgroups.

Second, partitioning the data can incentivize researchers to propose multiple studies as quickly as possible to box out other researchers from using the database.

Moreover, partitioning may simply be an inefficient use of data insofar as it permits a relatively scarce number of studies. Suppose that we have a set of hypotheses each requiring a sample of size $\geq k$ using dataset $D$ of size $n$. Then one can perform at most $C \leq \lfloor \frac{n}{k} \rfloor$ many studies while maintaining disjointness. In the following sections we will discuss lower bounds for the number of subsamples one can draw of size $k$ while maintaining a low probability of a large pairwise intersection in datasets. This can serve as a pragmatic measure of the capacity of a dataset to accommodate boundedly-dependent inferential tasks. While independence is, on balance, preferable to positive dependence, we are seeking pragmatic and consequently imperfect techniques to lower our epistemic risk.

\subsection{The Investigator's Workflow}

The above sections illustrated how the dependence on testing procedures can be eliminated by partitioning the dataset across studies. However, for many datasets the allocation of data to individual studies that cannot be subsequently reused might be undesirable. For example: (i) it may permit few studies to be performed using the data, (ii) one research group may hog data by requiring a high degree of power, limiting the power of other studies, (iii) the desire to preserve the dataset for the evaluation of heretofore uncontemplated analyses.

One avenue for reducing data reuse while not allocating portions of the dataset for the exclusive use of a single inferential task is to allow for investigators to subsample the data. The basic workflow is this:

\begin{enumerate}
\item The investigator specifies the inclusion/exclusion criteria of the study, Type I error rate $\alpha$, a minimum relevant effect size $\Delta_0$, and Type II error rate $1-\beta$ depending on $\alpha$ and $\Delta_0$.
\item Using these inputs, the researcher determines a required sample size vector $\overline{n}_0$.
\item If the dataset $D$ has the data required to perform the analysis, the investigator samples from $D$ without replacement to obtain a sample $\widehat{D}$.
\item The investigator performs the desired analysis on $\widehat{D}$.
\end{enumerate}

An approach like this has been advocated by \cite{dahl_data_2008}), and data splitting or data reduction is ubiquitous in machine learning.

While this approach does not ensure nonempty overlap in data, it does at least mitigate its effect.

\subsection{Why Subsampling Without Replacement is Justified}

In the above workflow, the recommendation was to subsample from $D$ to a sample size sufficient for ensuring adequate power $\beta$ and level $\alpha$. Why is this justified in an inferential context? Assuming that the original sampling procedure is exchangeable for all sample sizes $n$, subsampling $k<n$ without replacement of them from the sample is equidistributed to the equivalent to sampling $k$ elements under the original sampling procedure.

\begin{proposition}\label{prop:subsample_justification}
Let $\Omega$ be a sample space and $\varsigma_{\infty}$ be a sampling procedure on $\Omega$ such that:
\begin{enumerate}
\item $\varsigma_{\infty} = (X_1,\ldots, X_n,\ldots)$ is an infinite sequence of elements in $\Omega$,
\item $\varsigma_n = (X_1,\ldots X_n)$,
\item $\varsigma_{\infty}$ is finitely exchangeable; that is, for all finite $n$ and permutations $\phi: \{1,\ldots, n\} \to \{1,\ldots, n\}$ the truncations of $\varsigma_{\infty}$ to the first $n$ draws satisfy
\begin{equation}
(X_1,\ldots, X_n) \sim (X_{\phi(1)},\ldots, X_{\phi(n)}).
\end{equation}
\end{enumerate}
Let $\nu_{k,n}$ with $k\leq n$ be the sampling procedure given by drawing elements from $\varsigma_{n}$ and then sampling $k$ uniformly without replacement from $\varsigma_n$.

Then for all $n$ and all $k\leq n$
\begin{equation}
\varsigma_k \sim \nu_{k,n}.
\end{equation}
\end{proposition}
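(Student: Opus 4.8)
The plan is to condition on the randomly selected index set and reduce the whole statement to a single application of finite exchangeability at level $n$ together with a coordinate projection.

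First I would fix notation: write $\nu_{k,n} = (X_{I_1},\ldots,X_{I_k})$, where $(I_1,\ldots,I_k)$ is a uniformly random ordered $k$-tuple of distinct elements of $\{1,\ldots,n\}$, drawn independently of $\varsigma_\infty$ --- this is the content of ``sampling $k$ uniformly without replacement from $\varsigma_n$''. For any measurable $A\subseteq\Omega^k$, conditioning on the value of $(I_1,\ldots,I_k)$ and using this independence (Fubini/tower property) gives
\[
\mathrm{pr}(\nu_{k,n}\in A)=\frac{1}{(n)_k}\sum_{(j_1,\ldots,j_k)}\mathrm{pr}\big((X_{j_1},\ldots,X_{j_k})\in A\big),
\]
where $(n)_k = n!/(n-k)!$ and the sum ranges over all ordered $k$-tuples of distinct indices in $\{1,\ldots,n\}$.

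The key step is to show every summand equals $\mathrm{pr}(\varsigma_k\in A)$. Given distinct $j_1,\ldots,j_k$, extend the injection $\ell\mapsto j_\ell$ to a permutation $\phi$ of $\{1,\ldots,n\}$ (any bijection agreeing with it on the first $k$ slots). Finite exchangeability at level $n$ yields $(X_1,\ldots,X_n)\sim(X_{\phi(1)},\ldots,X_{\phi(n)})$; pushing this equality in distribution forward through the measurable projection onto the first $k$ coordinates gives $(X_1,\ldots,X_k)\sim(X_{\phi(1)},\ldots,X_{\phi(k)})=(X_{j_1},\ldots,X_{j_k})$. Substituting back, all $(n)_k$ terms coincide, the normalization cancels, and we obtain $\mathrm{pr}(\nu_{k,n}\in A)=\mathrm{pr}(\varsigma_k\in A)$ for every $A$, i.e.\ $\varsigma_k\sim\nu_{k,n}$.

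I do not anticipate a real obstacle: the only points requiring care are (a) stating explicitly that the index-selection randomness is independent of $\varsigma_\infty$ so the conditioning step is legitimate, and (b) observing that equality in distribution is preserved by the pushforward under the coordinate projection --- both routine. If instead ``without replacement'' is interpreted as producing an unordered sample (ordered, say, by increasing index), the identical argument goes through with $(j_1,\ldots,j_k)$ replaced by increasing tuples $j_1<\cdots<j_k$ and $(n)_k$ by $\binom{n}{k}$.
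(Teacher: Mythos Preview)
Your proposal is correct and is essentially the same argument as the paper's: represent the subsample via a uniformly random injection from $\{1,\ldots,k\}$ into $\{1,\ldots,n\}$, extend it to a permutation of $\{1,\ldots,n\}$, apply finite exchangeability at level $n$, and project onto the first $k$ coordinates. Your version is more explicit about the conditioning/averaging over the random index tuple and the pushforward under projection, which the paper leaves implicit, but the route is the same.
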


\begin{proof}
Suppose we sample according to the scheme $\nu_{k,n}$. Let $(X_1,\ldots, X_n)$ be the sample of size $n$ drawn from $\varsigma_n$. Let $(Y_1,\ldots, Y_k)$ be sampled from $\nu_{k,n}$. Sampling $k$ elements without replacement from a set of size $n$ is equivalent to uniformly drawing an injective function $\phi:\{1,\ldots, k\}$, so that each $Y_i = X_{\phi(i)}$. Let $\widetilde{\phi}: \{k+1,\ldots, n\} \to (\{1,\ldots, n\}\setminus \text{im}(\phi))$ extend $\phi$ to a permutation of $\{1,\ldots, n\}$.

By exchangeability, 
\begin{equation}
(X_1,\ldots, X_n) \sim (X_{\phi(1)},\ldots, X_{\phi(k)}, X_{\widetilde{\phi}(k+1)},\ldots, X_{\widetilde{\phi}(n)})
\end{equation}
and
\begin{equation}
(X_{\phi(1)},\ldots, X_{\phi(k)}) \sim \varsigma_k.
\end{equation}
Thus, $\nu_{k,n} \sim \varsigma_k$ as desired.
\end{proof}

The content of this proposition is that so long as your inferential procedure relies on an assumption of an exchangeable sampling procedure for each sample size $n$, sampling without replacement $k$ elements from a larger dataset is inferentially equivalent to having sampled $k$ elements according to the original sampling procedure. In particular, a sequence of iid draws of a distribution $\theta$ satisfies the conditions of Proposition \ref{prop:subsample_justification}.

Consequently, the Type I error rates and Type II error rates for the subsampled data will be distributionally equivalent to data sampled directly from $\Omega$ of the same size under exchangeable sampling $S$.

\subsection{When is the Dataset Used Up? Extremal Aspects of Data Reuse Scaling in the Number of Studies}

A natural concern is how the subsampling procedure mitigates the risk of high degrees of pairwise data overlap which induce pairwise dependent testing procedures. To study this, we define the following measure of worst-case risk of sampling $C$ subsets of size $k$ from a set of size $n$ and realizing an intersection of size $\geq \ell$:
\begin{equation}
\mathrm{pr}\left(\max_{1\leq i<j\leq C} |X_i \cap X_j| \geq \ell \right).
\end{equation}
For example, if we aim to have a $\leq 5\%$ chance of a pairwise intersection of size $\geq \frac{k}{20}$ we would demand 
\begin{equation}
\mathrm{pr}\left(\max_{1\leq i<j\leq C} |X_i \cap X_j| \geq \frac{k}{20} \right) \leq 5\%.
\end{equation}

For fixed $N$, $k$, and $\ell \leq k$ the probability is nondecreasing in the number $C$ of subsets drawn. We probe this dependence through combinatorial estimates.

\subsection{Combinatorial Upper Bounds for Low-Data Usage}

First observe that the sizes $|X_i\cap X_j|$ are not independent for $i\neq j$. Consequently we cannot appeal to independence in the construction of our bounds. However, we can reduce the problem by iterative uses of the union bound.

Let $n>0$ and let $[n] = \{0,\ldots,n-1\}$. We are uniformly sampling $C>1$ $k$-subsets with replacement $X_1,\ldots,X_C$ of $[n]$ and are interested in getting an upper bound on the probability that the maximum size of the pairwise intersections $X_i\cap X_j$ for $i\neq j$ is bounded above by some fixed $\ell< k$. More precisely, we want to get an expression for
\begin{equation}
\mathrm{pr}\left(\max_{1\leq i<j \leq C} |X_i\cap X_j| \geq \ell \right).
\end{equation}

Observe that the event in question is the disjunction between $\binom{C}{2}$ many inequalities:
\begin{equation}
\max_{1\leq i<j \leq C} |X_i\cap X_j| \geq \ell \iff \bigvee_{1\leq i < j \leq C} |X_i\cap X_j| \geq \ell.
\end{equation}
By the uniformity of sampling,
\begin{equation}
\mathrm{pr}(|X_i\cap X_j|\geq \ell) = \mathrm{pr}(|X_1\cap X_2|\geq \ell).
\end{equation}
So, by the union bound we can conclude that
\begin{equation}
\mathrm{pr}\left(\max_{1\leq i<j \leq C} |X_i\cap X_j| \geq \ell \right) \leq \binom{C}{2}\mathrm{pr}(|X_1\cap X_2|\geq \ell).
\end{equation}
This admits a further simplification: by the uniformity assumption on sampling, the random variables $|X_1 \cap X_2|$ and $|X_1\cap[k]|$ are identically distributed, with the latter being $|X_1\cap[k]| \sim \text{Hypergeometric}(n,k,k)$.

Thus, a sufficient condition on $C$ to ensure a $\leq p_{tol}$ probability of an $\geq \ell$-sized intersection is 
\begin{equation}\label{eq:max_c_estimate}
C \leq \sqrt{\frac{2 p_{tol}}{\mathrm{pr}(|X_1\cap [k]|\geq \ell)}}.
\end{equation}
It remains to estimate the quantities $\mathrm{pr}(|X_1\cap X_2| \geq \ell)$.

With computer assistance one can directly evaluate the quantity $\mathrm{pr}(|X_1\cap [k]|\geq \ell)$, but we give a less sharp upper bound that demonstrates the at-least exponential rate of growth in the capacity to sample sets with low probabilities of large intersections.

\begin{proposition}\label{prop:concentration_bound} 
Let $n > k = |X_1|$, $\ell \in \left[\frac{k^2}{n},k\right]$, and
\begin{equation}
b = \frac{\ell}{k} - \frac{k}{n}.
\end{equation}
Then 
\begin{equation}
\mathrm{pr}(|X_1\cap [k]|\geq \ell) \leq e^{-2kb^2}.
\end{equation}
\end{proposition}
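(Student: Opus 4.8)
The plan is to recognize $\mathrm{pr}(|X_1\cap[k]|\ge\ell)$ as a one-sided large-deviation probability for a hypergeometric random variable and to bound it by the finite-population (sampling-without-replacement) form of Hoeffding's inequality. As recorded in the discussion preceding the proposition, with $[k]\subseteq[n]$ a fixed reference $k$-subset and $X_1$ a uniformly random $k$-subset of $[n]$, the overlap $|X_1\cap[k]|$ has a $\mathrm{Hypergeometric}(n,k,k)$ distribution: it counts how many of $k$ balls drawn without replacement from an urn containing $k$ ``special'' balls (those lying in $[k]$) and $n-k$ ordinary balls turn out to be special. Writing $|X_1\cap[k]|=\sum_{i=1}^k Z_i$ with $Z_1,\dots,Z_k\in\{0,1\}$ the indicators that the successive draws are special, we have a size-$k$ sample drawn without replacement from a population of $n$ items of which exactly $k$ carry the value $1$, so the population mean is $k/n$ and $\mathbb{E}[|X_1\cap[k]|]=k^2/n$.

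Next I would cast the event into the shape a Hoeffding bound expects. Since $\ell\ge k^2/n$ by hypothesis, $b=\ell/k-k/n\ge 0$, and
\[
\{\,|X_1\cap[k]|\ge\ell\,\}=\Big\{\,\tfrac1k\sum_{i=1}^k Z_i-\tfrac kn\ \ge\ b\,\Big\},
\]
i.e.\ the sample mean $\bar Z=|X_1\cap[k]|/k$ exceeds its expectation $k/n$ by at least $b$; the upper constraint $\ell\le k$ guarantees $b\le 1-k/n$, so the event is neither empty nor forced.

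Then I would invoke Hoeffding's inequality for sampling without replacement (Hoeffding 1963; see also Serfling 1974): for a sample of size $m$ drawn without replacement from a finite population with values confined to an interval of length $R$ and population mean $\mu$, one has $\mathrm{pr}(\bar X-\mu\ge t)\le\exp(-2mt^2/R^2)$ --- the very same exponent as in the i.i.d.\ case, since without-replacement sums are no less concentrated than with-replacement ones. Applying this with $m=k$, $R=1$ (values in $\{0,1\}\subseteq[0,1]$), $\mu=k/n$, and $t=b$ gives
\[
\mathrm{pr}(|X_1\cap[k]|\ge\ell)\ \le\ e^{-2kb^2},
\]
which is exactly the asserted bound.

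The one point I would handle with care --- and the only real obstacle --- is legitimizing the use of Hoeffding's bound in the without-replacement regime rather than reflexively quoting the i.i.d.\ statement. Hoeffding's original paper already proves the finite-population version, so a citation suffices; but if a self-contained argument is wanted, one can run the Azuma--Hoeffding inequality on the Doob martingale obtained by revealing $Z_1,\dots,Z_k$ one draw at a time (whose increments are bounded by the appropriate amount), or perform a direct Chernoff estimate on the hypergeometric mass function in the style of Chv\'atal. Each route yields the same exponent $2kb^2$; the reduction to Hoeffding's finite-population bound is the shortest and is what I would present.
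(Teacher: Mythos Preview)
Your proposal is correct and is essentially the same as the paper's proof: the paper simply cites the hypergeometric tail bound of Chv\'atal (and Skala) as a black box, which is exactly the Hoeffding-without-replacement inequality you spell out and apply with $m=k$, $R=1$, $\mu=k/n$, $t=b$. Your version is more expository in that it names the underlying mechanism and sketches alternative self-contained derivations, but the route and the resulting exponent $2kb^2$ coincide.
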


\begin{proof}
The hypergeometric random variable $|X_1\cap[k]| \sim \text{Hypergeometric}(n,k,k)$ satisfies the following tail bound (see, e.g., \cite{chvatal_tail_1979}or \cite{skala_hypergeometric_2013}) for $0 \leq b \leq \frac{k}{n}$:
\begin{equation}
\begin{array}{ccc}
\mathrm{pr}\left(|X_1\cap[k]| \geq \mathbb{E}[|X_1\cap[k]|] + bk\right) & = & \mathrm{pr}\left(|X_1\cap[k]| \geq \frac{k^2}{n}+ bk\right) \\
& \leq & e^{-2kb^2}.
\end{array}
\end{equation}
Writing $\ell = \frac{k^2}{n}+ bk$ we isolate $b = \frac{\ell}{k} - \frac{k}{n}$. Thus 
\begin{equation}
\mathrm{pr}(|X_1\cap [k]|\geq \ell) \leq e^{-2kb^2}.
\end{equation}
\end{proof}

This immediately yields a bound on the number of samples of size $k$ from a set of size $n$ while controlling the probability of a large pairwise intersection from occurring.

\begin{corollary}\label{cor:hypergeometric_tail} 
Let $r_1 = \frac{k}{n}$ and $r_2= \frac{\ell}{k}$ and $p_{tol}$ be fixed. Assume $r_2 \in \left[\frac{k}{n},1\right]$. Then the quantity 
\begin{equation}
C_0(k;r_1,r_2,p_{tol}) = \text{argmax}_A \left[\mathrm{pr}\left(\max_{1\leq i<j\leq A} |X_i \cap X_j| \geq kr_2 \right) \leq p_{tol}\right]
\end{equation}
where $X_i,X_j$ are independently and uniformly sampled $k$-subsets of a set of size $n$ satisfies
\begin{equation}
C_0(k) \geq \sqrt{2p_{tol}} e^{k \left| r_1 - r_2 \right|^2}.
\end{equation}
\end{corollary}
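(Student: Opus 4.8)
The plan is simply to chain together the union-bound estimate \eqref{eq:max_c_estimate} with the hypergeometric tail bound of Proposition~\ref{prop:concentration_bound}. Recall that \eqref{eq:max_c_estimate} was obtained by writing the event $\{\max_{i<j}|X_i\cap X_j|\ge \ell\}$ as a disjunction over the $\binom{C}{2}$ pairs, applying the union bound together with the fact that each $|X_i\cap X_j|$ is distributed as $|X_1\cap[k]|$, and then using $\binom{C}{2}\le C^2/2$; the upshot is that any $C$ with $C\le\sqrt{2p_{tol}/\mathrm{pr}(|X_1\cap[k]|\ge\ell)}$ already forces $\mathrm{pr}(\max_{i<j}|X_i\cap X_j|\ge\ell)\le p_{tol}$.

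Next I would feed in the concentration bound. Set $b=\ell/k-k/n=r_2-r_1$; the hypothesis $r_2\in[k/n,1]$ guarantees $b\ge 0$ (so $b=|r_1-r_2|$) and also that $\ell=kr_2$ lies in $[k^2/n,k]$, which is exactly the range in which Proposition~\ref{prop:concentration_bound} applies. That proposition gives $\mathrm{pr}(|X_1\cap[k]|\ge\ell)\le e^{-2kb^2}$, hence
\[
\sqrt{\frac{2p_{tol}}{\mathrm{pr}(|X_1\cap[k]|\ge\ell)}}\ \ge\ \sqrt{\frac{2p_{tol}}{e^{-2kb^2}}}\ =\ \sqrt{2p_{tol}}\,e^{kb^2}\ =\ \sqrt{2p_{tol}}\,e^{k|r_1-r_2|^2}.
\]
Finally I would invoke the monotonicity noted earlier --- the probability $\mathrm{pr}(\max_{1\le i<j\le A}|X_i\cap X_j|\ge \ell)$ is nondecreasing in $A$ --- so the set of admissible $A$ is an initial segment of the integers and $C_0(k;r_1,r_2,p_{tol})$ is its largest element. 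Since every integer $A$ below $\sqrt{2p_{tol}}\,e^{k|r_1-r_2|^2}$ is admissible by the previous two steps, $C_0(k)$ is at least that threshold (more precisely, at least its floor), which is the claimed bound.

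I do not anticipate a genuine obstacle; the argument is a substitution. The only points requiring care are bookkeeping: verifying that $b=r_2-r_1$ falls in the range demanded by Proposition~\ref{prop:concentration_bound} so the cited hypergeometric tail inequality is legitimate, and being explicit that exhibiting one feasible value of $A$ below the stated threshold yields a lower bound on the $\mathrm{argmax}$ precisely because feasibility is monotone in $A$. I would also add a one-line remark that the corollary as stated suppresses the integer floor on the right-hand side; this is immaterial for the intended reading, namely that the sampling capacity grows at least exponentially in $k|r_1-r_2|^2$.
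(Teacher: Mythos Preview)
Your proposal is correct and follows the same route as the paper: invoke the union-bound estimate \eqref{eq:max_c_estimate} to get $C_0(k)\ge\sqrt{2p_{tol}/\mathrm{pr}(|X_1\cap[k]|\ge\ell)}$, then plug in the concentration bound of Proposition~\ref{prop:concentration_bound} with $b=r_2-r_1$. Your version is in fact slightly more careful than the paper's, which omits the explicit verification that $\ell\in[k^2/n,k]$ and the monotonicity remark justifying the $\mathrm{argmax}$ inequality.
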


\begin{proof}
By (\ref{eq:max_c_estimate}) we know that $C_0(k) \geq \sqrt{\frac{2 p_{tol}}{\mathrm{pr}(|X_1\cap [k]|\geq \ell)}}$. By Proposition \ref{prop:concentration_bound} we know that 
\begin{equation}
\mathrm{pr}(|X_1\cap X_2|\geq kr_2) \leq e^{-2k \left| r_1 - r_2 \right|^2}.
\end{equation} 
Hence 
\begin{equation}
\begin{array}{ccc}
C_0(k) & \geq & \sqrt{2p_{tol}}\sqrt{\frac{1}{\mathrm{pr}(|X_1\cap [k]|\geq \ell)}} \\
& \geq & \sqrt{2p_{tol}} \sqrt{\frac{1}{e^{-2k \left| r_1 - r_2 \right|^2}}} \\
& = & \sqrt{2p_{tol}} e^{k \left| r_1 - r_2 \right|^2}.
\end{array}
\end{equation}
as desired.
\end{proof}

To apply these bounds, one must first verify that $\ell \geq \frac{k^2}{n}$ (equivalently $\frac{\ell}{k}\geq \frac{k}{n}$).

\begin{example}
Let $n=10,000$ and suppose that we want to sample without replacement $k = 2,000$ observations per study. How many studies can we draw while maintaining a $\leq 5\% = p_{tol}$ probability of a large intersection for a given $\ell$?

To apply Corollary \ref{cor:hypergeometric_tail}, we have to at least tolerate $\ell \geq \frac{k^2}{n} = 400$, which means that $\frac{\ell}{k} \geq 0.2$. As Table \ref{tab:max_c_table}, once we allow for $\frac{\ell}{k} \geq 0.275$, over $24,000$ studies can be performed while controlling the rate of a maximal pairwise intersection.

\tbl{Relation of $C_0$ to the tolerance for the maximal intersection size}{
\begin{tabular}{ccc}
\textbf{$\frac{\ell}{k}$} & \textbf{$\ell$} & \textbf{$C_0$ lower bound} \\
0.200 & 400 & 0 \\
0.225 & 450 & 1 \\
0.250 & 500 & 46 \\
0.275 & 550 & 24,311 \\
0.300 & 600 & $1.53 \times 10^{8}$ \\
0.325 & 650 & $1.17 \times 10^{13}$ \\
0.350 & 700 & $1.10 \times 10^{19}$ \\
0.375 & 750 & $1.26 \times 10^{26}$ \\
0.400 & 800 & $1.75 \times 10^{34}$ \\
0.425 & 850 & $2.96 \times 10^{43}$ \\
\end{tabular}}
\label{tab:max_c_table}

\end{example}

Already on the order of $n=10,000$ a very large number of studies using $\frac{n}{5}$ observations can be performed with low risk of intersections of size $\geq \frac{k}{3}$. Since the power and/or estimate precision of a study are generally concave in sample size, as $n$ grows typical studies will need only a very sparse subset of the observations to achieve strong power. For example, if $n = 1,000,000$, $k = 10,000$, and $\ell = 500$ (so that $\frac{\ell}{k} = \frac{1}{20}$) and $p_{tol} = 5\%$, up to $2,810,034$ subsets could be safely sampled.

\subsection{Guaranteed Rates of Data Reuse for Data-Intensive Studies}

The amount of data contained in the database and data intensiveness of the tasks using the data can inform how many studies it might take to result in large amounts data reuse.

By contrast with the previous section, this section addresses the question of when overlap is guaranteed. These results are most interesting for small, shared datasets and pose mathematical limits on how data can be reused.

More formally, let $D$ be a dataset of size $N$, $k<N$ be the per-study required sample, $\ell \leq k$ be an integer, and $p\in [0,1]$ be probability. We define $C = C(N,k,\ell,p)$ to be the minimum $C$ such that independent uniform samples $X_1,\ldots, X_C$ of $D$ of size $k$ satisfy
\begin{equation}
\mathrm{pr}\left(\max_{1\leq i<j\leq n} |X_i \cap X_j| \geq \ell \right) \geq p.
\end{equation}

We consider two examples.

\begin{example}
In the most simple case, we investigate $\ell$ when $C = 2$, $p=1$, and $k \geq \frac{N}{2}$.

In the most simple case, significant data reuse can be guaranteed even in the case of two data-intensive studies. Suppose that we have data $D$ of size $N$, and two studies each require $k < N$ data points $X_1, X_2\subseteq D$. The studies each sample a random subset of size $k$ from $D$.

By the inclusion-exclusion principle, we know that 
\begin{equation}
|X_1\cap X_2| = |X_1|+|X_2|-|X_1\cup X_2|.
\end{equation}
Since $X_1\cup X_2 \subseteq D$, $|X_1\cup X_2| \leq N$ and so
\begin{equation}
|X_1\cap X_2| \geq 2k - N.
\end{equation}
Consequently, if $k>\frac{N}{2}$ then $X_1\cap X_2$ is nonempty.

Supposing that we want to avoid rates of data overlap above a given $\lambda = \frac{\ell} {k}\in [0,1]$: that is, $|X_1\cap X_2| \geq \lambda k$. If $k\geq \frac{N}{2-\lambda}$ then it is guaranteed that $\frac{|X_1\cap X_2|}{k} \geq \lambda$.
\end{example}

This easy case shows how if the data required to perform two studies is sufficiently large relative to the overall dataset, high degrees of data overlap are combinatorially guaranteed.

In the next case, we give an upper bound on $C$ valid in all cases and which is sharp in the case $p = 1$ and $\ell = k$.

\begin{example}
Observe that if $X_i = X_j$ then $|X_i\cap X_j| = |X_i| = k$. Consequently, any $C$ greater than the number of distinct subsets of $D$ of size $k$ guarantees that some $X_i = X_j$ with $i\neq j$. Thus $C = \binom{N}{k} + 1$ guarantees an intersection of size $k$. Hence,
\begin{equation}
\mathrm{pr}\left(\max_{1\leq i<j\leq \binom{N}{k}+1} |X_i \cap X_j| \geq \ell \right) =1
\end{equation}
for all $\ell \leq k$. Moreover, for $p=1$ and $\ell = k$ there is positive probability that all $\binom{N}{k}$ distinct subsets of size $k$ are sampled, so the bound on $C(N,k,k,1)$ is sharp.
\end{example}

\subsection{The Distribution of the Number of Times a Unit is Included in a Study}

How does subsampling with replacement affect the rate of reuse of units? We consider the example of reuse of a control group. Suppose that we have a common control group $C$ and many treatment groups $T_1,\ldots, T_n$. If $n$ distinct studies use $C$ as a control group, what is the effect on the dependence structure of the statistical tasks if each study independently samples some $S_i$ of size $k_i \leq |C|$ without replacement? Let $r_i = \frac{n_i}{|C|}$. Then for each $x\in C$, the probability that $x$ is in the $i^{\text{th}}$ sample is 
\begin{equation}
\mathrm{pr}(x\in S_i) = r_i.
\end{equation}

Assuming independence of the sampling procedures $S_i$, we see that
\[
\mathrm{pr}(x\in S_i \cap S_j)= r_i r_j.
\]

If we know the fraction of control group $r_i$ requested by each study, the distribution of the count of times a given control unit appears in a study is given by $\sum_i \text{Bern}(r_i)$ random variables. If the $r_i$ are sufficiently close to $0$ then this distribution can be approximated by the Poisson distribution with mean $\sum r_i$ by classical results of \cite{cam_approximation_1960} and \cite{jr_poisson_1960}. For instance, suppose that $\max(r_i)\leq \frac{1}{4}$. Then the supremum of the absolute difference $D$ between the cumulative distribution functions of $\text{Poiss}(\sum r_i)$ and $\sum \text{Bern}(r_i)$ satisfies
\begin{equation}
\begin{array}{ccc}
D & \leq & 16 \frac{\sum r_i^2}{\sum r_i}.
\end{array}
\end{equation}
Consequently, for sufficiently small values of $\sum r_i$ the probability that a given unit will be used by more than one study is approximately
\begin{equation}
\mathrm{pr}\left(\sum \text{Bern}(r_i) \geq 2 \right) \approx 1-\frac{1+\sum r_i}{e^{-\sum r_i}}.
\end{equation}

\section{Downstream Effects and Future Directions}

We have identified the problem of dependent inferential procedures arising from data reuse, and adequately addressing it is a community endeavour. In this section we outline starting points for future research in this area, identifying stakeholders and the roles they might play in developing robust protocols to address the needed changes to protect against this kind of dependence.

\subsection{Considerations for Investigators}

The main positive upshot of this work is that in many cases investigators can take concrete steps to mitigate their data reuse. We anticipate that there are many possible procedures that can be proposed which can be implemented by individual investigators to mitigate this form of dependence. Given the proliferous nature of research, the relative tradeoffs between different procedures should be explored. 

For example, consider an investigator evaluating hypothesis $H_0$ comparing the means of two groups. Prior to receiving the data, the investigator can provide an analysis plan including (a) the desired level $\alpha$ for the test of $H_0$, (b) a minimum relevant effect size $\Delta_0$, and (c) a desired power $1-\beta$ of detecting an effect of size $\geq \Delta_0$ to determine the per-arm sample size $(N_0, N_1)$ needed to perform the analysis. Then, provided that the dataset $D$ has sufficient data to be achieve the desired power, the investigator can sample (with replacement) from $D$ to achieve the desired sample size. In case methods to adjust for selection effect such as matching are required, a larger sample may be required; the development of heuristics to determine how much larger the amount of data required would be valuable. The choice of $\alpha$, $\beta$ and $\Delta_0$ should be justified; see for example \cite{lakens_justify_2018}. The evaluation of this tradeoff is especially pertinent for the analysis of rare outcomes and subgroups.

Similarly, for estimation tasks, the investigator can specify a desired coverage probability $1-\alpha$ and precision $\rho$ to determine the required sample size.

This approach has two felicitous knock-on effects. First, it encourages the investigator to engage in research design prior to receiving data, and second it encourages investigators to be explicit in reporting their criteria for inferential success.


\subsection{Considerations for Data Governance and Funding Agencies}

Some large databases are freely and publicly available. Others, more common in clinical outcomes research, may require fees and an analysis plan to be provided prior to the release of the data. For such databases, the database management team can occupy a critical role in reducing inferential dependence. To an extent, some practices currently employed by the centrally-managed datasets provide partial mitigation of these issues by reviewing proposed studies to ensure that facially overlapping studies are not performed, reducing the reuse of data. 

The results of this paper suggest that more robust policies for managing cross-study dependence are possible for these centrally-managed datasets. The database manager has many options for managing the use of their data, including (i) the ability to set an inferential utility function to inform dependence tolerance on a batch of requested studies, and (ii) review the requests of investigators to pressure test their choices of level, power, and minimum relevant effect size or precision, or (iii) setting fixed $\alpha$, $\beta$, and $\Delta_0$ ex ante.

Likewise, funding agencies can also play a role by requiring investigators to justify their data usage strategies and demonstrate how they will minimize unnecessary data reuse while maintaining adequate statistical power. Funding agency level procedures can facilitate across-researcher behaviors through coordination, but also through offering incentives which guide individual-level research practices.

\subsection{Considerations for Journals and Reviewers}

Journals, in their editorial role, are a major checkpoint currently used to verify compliance with best practices against research errors. The current framework to evaluate statistical rigor and reproducibility used at most journals is informed by the traditional framework, where each study has some independence from previous research through the use of novelly generated data sets. Recognizing the new data reuse regime in light of our findings means changing how reviewers and editors perform their roles. The research in this direction will quickly encounter the burden that could be placed on reviewers to be aware of preceding research, and will need to consider ways to mitigate that burden. The development of standardized reporting guidelines relating to data reuse and modeled after successful standards (such as CONSORT) are necessary.

\section{Conclusion}

The growth of scientific knowledge necessarily takes place in a resource-constrained environment, a constraint that in many cases has been significantly alleviated by the economies of scale that allow for the streamlined collection and release of data to investigators in the form of registries and large datasets. However, common practice in the use of these datasets exposes our collective knowledge to cascading risks of inferential error through heavy reuse of data that we term data gluttony.

Actuarial risk theory provides a framework to evaluate these risks from a decision-theoretic perspective to evaluate and identify ways to mitigate these risks. Current standards for statistical reporting are established at a study-by-study, hypothesis-by-hypothesis approach to managing error rates that fail to address the systemic issue of dependence in our statistical procedures. While the per-comparison error rate might be kept to a nominal $\alpha = 0.05$, the shape of the distribution of errors is not controlled.

In this paper we articulated a decision-theoretic approach to managing cascading risks of inferential error due to data reuse. The solution to this turns on the diminishing marginal returns of statistical success in the sample size: one can mitigate one crucial source of inferential error by using only as much data as is needed to achieve a negligibly different level of statistical performance.

This analysis also highlights one of the key benefits of these datasets: the analysis of rare subgroups and outcomes. Depending on the sample size of the group or incidence of the outcome, the tradeoff between ensuring minimally dependent inferences across studies and power may be large. In such cases, the balance between gluttony and temperance may favour the glutton depending on the importance of question at hand.

Additionally, this work reinforces ways in which large datasets may be being used inefficiently: one would expect that the first effects to be validated in a dataset are suspected large effects that are detectable with a sample size orders of magnitude smaller than size of the data registry. Rather, one of the implications of our analysis is that large datasets are unique in their ability to facilitate more data-intensive statistical tasks such as careful evaluations of (i) equivalence or noninferiority of interventions, or (ii) analysis of rare subgroups and rarely occurring outcomes.
\newpage

\bibliographystyle{plain}
\bibliography{refs}

\newpage
\section*{Appendix 1}
\subsection*{Proofs of Results in Distributions of Error}

We state and prove the theorem that the error probabilities in Table \ref{tab:dependent_error_distribution_condprob} are correct.

\begin{theorem}\label{thm:dependent_error_probs}
Let $E_1$ and $E_2$ be events. Then the random variable counting the occurrence of $E_i$ is given by the distribution in Table \ref{tab:dependent_error_distribution_condprob}.
\end{theorem}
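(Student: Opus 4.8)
The plan is to let $N = \mathbb{1}_{E_1} + \mathbb{1}_{E_2}$ denote the number of occurrences among $E_1, E_2$, so that $N$ takes values in $\{0, 1, 2\}$, and to compute $\mathrm{pr}(N = j)$ for each $j$ directly from the probability axioms together with the multiplication rule $\mathrm{pr}(E_1 \cap E_2) = \mathrm{pr}(E_1)\mathrm{pr}(E_2 \mid E_1)$.

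First I would dispatch the case $N = 2$: the event $\{N = 2\}$ is exactly $E_1 \cap E_2$, so by the multiplication rule $\mathrm{pr}(N = 2) = \mathrm{pr}(E_1)\mathrm{pr}(E_2 \mid E_1)$, which is the last row of Table~\ref{tab:dependent_error_distribution_condprob}. Next, $\{N = 0\} = E_1^{c} \cap E_2^{c} = (E_1 \cup E_2)^{c}$, so by inclusion--exclusion $\mathrm{pr}(N = 0) = 1 - \mathrm{pr}(E_1) - \mathrm{pr}(E_2) + \mathrm{pr}(E_1 \cap E_2) = 1 - \mathrm{pr}(E_2) - \mathrm{pr}(E_1)\bigl(1 - \mathrm{pr}(E_2 \mid E_1)\bigr)$ after substituting the multiplication rule and regrouping, matching the first row. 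Finally, since $\{N = 0\}$, $\{N = 1\}$, $\{N = 2\}$ partition the sample space, $\mathrm{pr}(N = 1) = 1 - \mathrm{pr}(N = 0) - \mathrm{pr}(N = 2)$; the constant terms cancel and what remains is $\mathrm{pr}(E_1) + \mathrm{pr}(E_2) - 2\mathrm{pr}(E_1)\mathrm{pr}(E_2 \mid E_1) = \mathrm{pr}(E_2) + \mathrm{pr}(E_1)\bigl(1 - 2\mathrm{pr}(E_2 \mid E_1)\bigr)$, the middle row.

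There is no substantive obstacle; the proof is pure bookkeeping. The one point meriting a remark is that $\mathrm{pr}(E_2 \mid E_1)$ is undefined when $\mathrm{pr}(E_1) = 0$, so in that degenerate case every product $\mathrm{pr}(E_1)\mathrm{pr}(E_2 \mid E_1)$ is to be read as $\mathrm{pr}(E_1 \cap E_2) = 0$, leaving all three formulas valid. I would close with the consistency checks that the three probabilities sum to $1$ (automatic from the partition argument) and are each nonnegative under the standing assumptions, the only nonobvious one being the middle entry, whose nonnegativity reduces to $\mathrm{pr}(E_1 \cup E_2) \ge \mathrm{pr}(E_1 \cap E_2)$.
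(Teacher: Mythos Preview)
Your proposal is correct and follows essentially the same structure as the paper's proof: compute $\mathrm{pr}(N=2)$ by the multiplication rule, compute $\mathrm{pr}(N=0)$ directly, and obtain $\mathrm{pr}(N=1)$ from the partition. The only difference is that for $\mathrm{pr}(N=0)$ you invoke De~Morgan and inclusion--exclusion in one line, whereas the paper reaches the same expression through a longer chain of conditional-probability manipulations starting from $\mathrm{pr}(E_1^{c})\mathrm{pr}(E_2^{c}\mid E_1^{c})$; your route is slightly cleaner, and your added remarks on the degenerate case $\mathrm{pr}(E_1)=0$ and the nonnegativity check go beyond what the paper records.
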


\begin{proof}
Observe that the distribution of the count of $E$ is given by
\begin{table*}[h]
\centering
\begin{tabular}{cc}
\textbf{Count} & \textbf{Probability} \\
0 & $\mathrm{pr}(E_1^c \cap E_2^c)$ \\
1 & $\mathrm{pr}((E_1 \cup E_2) \cap (E_1\cap E_2)^c )$ \\
2 & $\mathrm{pr}(E_1 \cap E_2)$ \\
\end{tabular}
\end{table*}
    
By definition, the probability of two errors is:
\begin{align}
\mathrm{pr}(E_{\mathcal{P}} = 2) & = \mathrm{pr}(E_1\cap E_2) \\
& = \mathrm{pr}(E_1)\times \mathrm{pr}(E_2|E_1).
\end{align}

More tricky is the probability of zero errors:
\begin{align}
\mathrm{pr}(E_{\mathcal{P}} = 0) & = \mathrm{pr}(E_1^c \cap E_2^c) \\
& = \mathrm{pr}(E_1^c) \times \mathrm{pr}(E_2^c|E_1^c)\\
& = \mathrm{pr}(E_1^c) \times \left(\frac{\mathrm{pr}(E_2^c) - \mathrm{pr}(E_2^c\cap E_1)}{1-\mathrm{pr}(E_1)} \right) \\ 
& = \mathrm{pr}(E_1^c) \times \left(\frac{\mathrm{pr}(E_2^c) - \mathrm{pr}(E_1)\mathrm{pr}(E_2^c|E_1)}{1-\mathrm{pr}(E_1)} \right) \\
& = (1-\mathrm{pr}(E_1)) \times \left(\frac{1-\mathrm{pr}(E_2) - \mathrm{pr}(E_1)(1-\mathrm{pr}(E_2|E_1))}{1-\mathrm{pr}(E_1)} \right) \\
& = 1-\mathrm{pr}(E_2) - \mathrm{pr}(E_1)(1-\mathrm{pr}(E_2|E_1)).
\end{align}

Finally, since the probability of exactly one error must satisfy
\begin{equation}
\mathrm{pr}(E_{\mathcal{P}} = 1) = 1 - \mathrm{pr}(E_{\mathcal{P}} = 0) - \mathrm{pr}(E_{\mathcal{P}} = 2)
\end{equation}
we have
\begin{align}
\mathrm{pr}(E_{\mathcal{P}} = 1) & = 1 - (\mathrm{pr}(E_1)\times \mathrm{pr}(E_2|E_1)) - (1-\mathrm{pr}(E_2) - \mathrm{pr}(E_1)(1-\mathrm{pr}(E_2|E_1))) \\
& = \mathrm{pr}(E_2) + \mathrm{pr}(E_1)\times(1-2\mathrm{pr}(E_2|E_1))
\end{align}
as desired.
\end{proof}

Next, we calculate the stop-loss premiums of this portfolio in terms of $\mathrm{pr}(E_1)$, $\mathrm{pr}(E_2)$, and $\mathrm{pr}(E_2|E_1)$.

\begin{theorem}\label{thm:dependent_stop_loss}
Let $E_1$ and $E_2$ be events. Then the stop-loss premium of the portfolio $\rho_{E_\mathcal{P}}$ is given by the values in Table \ref{tab:dependent_error_stoploss}.
\end{theorem}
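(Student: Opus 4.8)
The plan is to evaluate the stop-loss premium $\rho_{E_\mathcal{P}}(L) = \sum_{x>L}(x-L)\,\mathrm{pr}(E_\mathcal{P}=x)$ directly, feeding in the distribution of $E_\mathcal{P}$ established in Theorem~\ref{thm:dependent_error_probs} (equivalently, Table~\ref{tab:dependent_error_distribution_condprob}). Since $E_\mathcal{P}$ is supported on $\{0,1,2\}$, the claim is a statement about three numbers, and each is a finite sum with at most two terms, so I would simply check $L=0$, $L=1$, and $L=2$ in turn.

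First, for $L=2$ the index set $\{x>2\}$ is empty, so $\rho_{E_\mathcal{P}}(2)=0$, matching the last row of Table~\ref{tab:dependent_error_stoploss}. For $L=1$, only $x=2$ contributes, so $\rho_{E_\mathcal{P}}(1) = (2-1)\,\mathrm{pr}(E_\mathcal{P}=2) = \mathrm{pr}(E_1)\,\mathrm{pr}(E_2\mid E_1)$ by the two-error formula from Theorem~\ref{thm:dependent_error_probs}. For $L=0$, the sum is $\mathrm{pr}(E_\mathcal{P}=1) + 2\,\mathrm{pr}(E_\mathcal{P}=2)$, which is exactly $\mathbb{E}[E_\mathcal{P}]$.

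To finish the $L=0$ case I would avoid substituting the explicit row-$0$ and row-$1$ probabilities from Table~\ref{tab:dependent_error_distribution_condprob} and instead observe that $E_\mathcal{P} = \mathbb{1}_{E_1} + \mathbb{1}_{E_2}$, so by linearity of expectation
\begin{equation}
\rho_{E_\mathcal{P}}(0) = \mathbb{E}[E_\mathcal{P}] = \mathrm{pr}(E_1) + \mathrm{pr}(E_2),
\end{equation}
which is the first row of Table~\ref{tab:dependent_error_stoploss}. There is no genuine obstacle in this proof: it is a routine evaluation of a definition against an already-computed distribution. The only spot where a calculation could momentarily look messy is the $L=0$ case if one insists on plugging in the conditional-probability formulas and cancelling the $\mathrm{pr}(E_2\mid E_1)$ terms by hand; the linearity shortcut is the step worth flagging, since it makes that cancellation invisible.
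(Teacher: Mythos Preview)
Your proposal is correct and mirrors the paper's own proof almost exactly: both evaluate $\rho_{E_\mathcal{P}}(L)$ for $L=0,1,2$ directly from the definition using the distribution in Theorem~\ref{thm:dependent_error_probs}. The only difference is that for $L=0$ the paper substitutes the explicit conditional-probability formulas and cancels by hand, whereas you invoke linearity of expectation on $E_\mathcal{P}=\mathbb{1}_{E_1}+\mathbb{1}_{E_2}$ to reach $\mathrm{pr}(E_1)+\mathrm{pr}(E_2)$ immediately---a small but clean shortcut.
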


\begin{proof}
We calculate each stop-loss premium using the probability mass function for $E_{\mathcal{P}}$ found in Theorem \ref{thm:dependent_error_probs}.
First,
\begin{align}
\rho_{E_\mathcal{P}}(0) & = \mathbb{E}[(E_\mathcal{P}-0)_{+}] \\
& = \sum_{x > 0} x\mathrm{pr}(E_\mathcal{P} = x) \\
& = 1\times \mathrm{pr}(E_\mathcal{P} = 1) + 2\times \mathrm{pr}(E_\mathcal{P} = 2) \\
& = 1\times(\mathrm{pr}(E_2) + \mathrm{pr}(E_1)(1-2\mathrm{pr}(E_2|E_1))) + 2\times\mathrm{pr}(E_1)\mathrm{pr}(E_2|E_1) \\
& = \mathrm{pr}(E_1) + \mathrm{pr}(E_2).
\end{align}

Next,
\begin{align}
\rho_{E_\mathcal{P}}(1) & = \mathbb{E}[(E_\mathcal{P}-1)_{+}] \\
& = \sum_{x > 1} (x-1)\mathrm{pr}(E_\mathcal{P} = x) \\
& = (2-1)\times\mathrm{pr}(E_\mathcal{P} = 2) \\
& = \mathrm{pr}(E_\mathcal{P} = 2) \\
& = \mathrm{pr}(E_1)\times \mathrm{pr}(E_2|E_1).
\end{align}
Finally,
\begin{align}
\rho_{E_\mathcal{P}}(2) & = \mathbb{E}[(E_\mathcal{P}-2)_{+}] \\
& = \sum_{x > 2} (x-2)\mathrm{pr}(E_\mathcal{P} = x) \\
& = 0
\end{align}
as desired.
\end{proof}

\section*{Appendix 2}
\subsection*{Additional examples}

To illustrate how dependence structure across analyses affects the distribution of the number of errors, consider the following example.

\begin{example}\label{ex:ex_repeated_hypothesis}
Let $H_{01} = H_{02}$ be the same null hypothesis with test statistics $T_{01} = T_{02}$, and let $D$ be a dataset.

Suppose that we test $H_{0i}$ each at level $\alpha$ on the same dataset $D$, with rejection regions $R_i$. By construction, the events $R_i$ are not independent:
\begin{equation}
\mathrm{pr}(R_1|R_2) = 1 \neq \alpha = \mathrm{pr}(R_1)
\end{equation}
but

Therefore, the distribution of the count of Type I errors in the portfolio of testing procedures
\begin{equation}
\mathcal{P} = \{(H_{01},R_1, D),(H_{02},R_2, D) \}
\end{equation}    
is described in the following table:

\begin{table}[h]
\centering
\begin{tabular}{cc}
\textbf{Count} & \textbf{Probability} \\
0 & $1 - \alpha$ \\
1 & 0 \\
2 & $\alpha$ \\
\end{tabular}
\end{table}

Let $E_{\mathcal{P}}$ be the random variable of the count of errors. We then have that the per-comparison error rate is given by  
\begin{align}
PCER(\mathcal{P}) & = \mathbb{E}\left(\frac{\sum{\mathbb{1}_{R_{\alpha}}}}{|\mathcal{P}|}\right)\\
& = \sum_{i}\frac{\mathrm{pr}(R_{\alpha})}{|\mathcal{P}|} \\
& = \frac{\alpha}{2} + \frac{\alpha}{2} \\
&= \alpha
\end{align}
and that
\begin{align}
FWER(\mathcal{P}) & = \mathrm{pr}(E_{\mathcal{P}}\geq 1) \\
& = \mathrm{pr}(E_{\mathcal{P}}= 1) + \mathrm{pr}(E_{\mathcal{P}}= 2) \\
&= 0+ \alpha \\
& = \alpha.
\end{align}
Since the false discovery rate $FDR(\mathcal{P})$ satisfies
\begin{equation}
PCER(\mathcal{P}) \leq FDR(\mathcal{P}) \leq FWER(\mathcal{P})
\end{equation}
we conclude that $FDR(\mathcal{P}) = \alpha$.
\end{example}

Let us contrast this example with testing the same hypothesis on two disjoint datasets.

\begin{example}\label{ex:same_hypothesis_disjoint_data}
Let $H_{01}=H_{02}$ be as in Example \ref{ex:ex_repeated_hypothesis}, but this time tested on disjoint datasets $D_1$ and $D_2$ respectively. Under the global null, the event $R_1$ is independent from the event $R_2$. Therefore the distribution of errors $E_{\mathcal{P}}$ is given by 
\begin{table}[h]
\centering
\begin{tabular}{cc}
\textbf{Count} & \textbf{Probability} \\
0 & $1 - 2\alpha+\alpha^2$ \\
1 & $2\alpha - 2\alpha^2$ \\
2 & $\alpha^2$ \\
\end{tabular}
\end{table}
    
In this case, 
\begin{equation}
PCER(\mathcal{P}) = \alpha
\end{equation}
and 
\begin{equation}
FWER(\mathcal{P}) = 2\alpha - \alpha^2
\end{equation}
but the expected number of errors remains the same.
\end{example}

Comparing the results of Examples \ref{ex:ex_repeated_hypothesis} and \ref{ex:same_hypothesis_disjoint_data}, we observe that decorrelating the tests resulted in a distribution of errors with the same mean but lower probability of 2 errors. Setting $\alpha = 0.05$, we get the following table of error probabilities:

\begin{table}[h]
\centering
\begin{tabular}{ccc}
\textbf{Count} & \textbf{Probability (Identical Tests)} &
\textbf{Probability (Independent Tests)}
\\
0 & $1-\alpha$ & $1 - 2\alpha+\alpha^2$ \\
1 & 0 & $2\alpha - 2\alpha^2$ \\
2 & $\alpha$ & $\alpha^2$ \\
\end{tabular}
\end{table}

In this example, the probability of at least zero errors is higher in the scenario described in Example \ref{ex:ex_repeated_hypothesis} than it is in the scenario of Example \ref{ex:same_hypothesis_disjoint_data}, but at the cost of increased risk of more catastrophic error accumulation

\end{document}